\def\Box{\leavevmode\vbox{\hrule
     \hbox{\vrule\kern4pt\vbox{\kern4pt}%
           \vrule}\hrule}}
\newcounter{appendix}
\def\appendix{\advance\c@appendix by 1
   \def\thesection{\Alph{section}}
   \ifnum\c@appendix=1 \setcounter{section}{-1} \fi
   \@startsection {section}{1}{\z@}{-3.5ex plus -1ex minus 
   -.2ex}{2.3ex plus .2ex}{\Large\bf}}
\def\paragraph#1{{\bf #1\ }}
\newtheorem{lemma}{Lemma}[section]  
\newtheorem{theorem}[lemma]{Theorem}
\newtheorem{corollary}[lemma]{Corollary}
\newtheorem{proposition}[lemma]{Proposition}
\newtheorem{remark}{Remark}[section]
\newcommand{\eps}{\varepsilon}
\newcommand{\lp}{\left(}
\newcommand{\rp}{\right)}
\newcommand{\Hop}{\mbox{H}}
\newcommand{\R}{\mathbb{R}}
\title{Continuum dynamics of the intention field under weakly cohesive social interactions} 
\author{Pierre Degond$^{(1)}$, Jian-Guo Liu$^{(2)}$, Sara Merino-Aceituno$^{(3)}$, Thomas Tardiveau$^{(4)}$}  
\date{} 
\begin{document}

\maketitle


\begin{center}
1-Department of Mathematics,
Imperial College London, \\
London SW7 2AZ, United Kingdom\\
email: pdegond@imperial.ac.uk
\end{center}

\begin{center}
2-Department of Physics and Department of Mathematics\\
Duke University,
Durham, NC 27708, USA\\
email: jliu@phy.duke.edu
\end{center}

\begin{center}
3-Department of Mathematics,
Imperial College London, \\
London SW7 2AZ, United Kingdom\\
email: s.merino-aceituno@imperial.ac.uk
\end{center}

\begin{center}
4-\'Ecole Polytechnique\\
Paris, 91128 Palaiseau, Route de Saclay, France, \\
email:  thomas.tardiveau@polytechnique.edu
\end{center}

\vspace{0.5 cm}
\begin{abstract}
We investigate the long-time dynamics of an opinion formation model inspired by a work by Borghesi, Bouchaud and Jensen. Firstly, we derive a Fokker-Planck type equation under the assumption that interactions between individuals produce little consensus of opinion (grazing collision approximation). Secondly, we study conditions under which  the Fokker-Planck equation has non-trivial equilibria  and derive the macroscopic limit (corresponding to the long-time dynamics and spatially localized interactions) for the evolution of the mean opinion. Finally, we compare  two different types of interaction rates: the original one given in the work of Borghesi, Bouchaud and Jensen (symmetric binary interactions) and one inspired from works by Motsch and Tadmor (non-symmetric binary interactions). We show that the first case leads to a conservative model for the density of the mean opinion whereas the second case leads to a non-conservative equation. We also show that
the speed at which consensus is reached asymptotically for these two rates has fairly different density dependence.
\end{abstract}

\medskip
\noindent
{\bf Acknowledgements:}      
P.D. acknowledges support from the Royal Society and the Wolfson foundation through a Royal Society Wolfson Research Merit Award; the British ``Engineering and Physical Research Council''	 under grant ref: EP/M006883/1; the National Science Foundation under NSF Grant RNMS11-07444 (KI-Net).
P.D. is on leave from CNRS, Institut de Math\'ematiques de Toulouse, France. P.D. is grateful to J-P. Bouchaud for suggesting the problem and for stimulating discussions.

J.G.L. was partially supported by KI-Net NSF
RNMS grant No. 1107291 and NSF grant DMS 1514826.

 S.M.A. was supported by the British ``Engineering and Physical Research Council'' under grant ref: EP/M006883/1. 
 
 T.T. gratefully acknowledges the hospitality of the Department of Mathematics at Imperial College London, where this research was conducted.

\medskip
\noindent
{\bf Key words: } opinion formation; grazing limit; non-symmetric rate; continuum limit; Deffuant-Weisbuch model.

\medskip
\noindent
{\bf AMS Subject classification: }82C21, 82C22, 82C26, 82C31, 82C40, 82C70, 91B12, 91B14, 91B70, 91B72.
\vskip 0.4cm

\setcounter{equation}{0}
\section{Introduction}
\label{intro}

The goal of the present article is the investigation of an opinion formation model inspired from the one presented in Ref. \cite{Bouchaud_etal_JStatMech14}. Firstly, we obtain the mean-field equations for this model and approximate the dynamics under the assumption that interactions between individuals produce little convergence of opinions (\textit{weak consensus interaction}). We study the equilibria for this case and show that, under some conditions,  it corresponds to a Gaussian  distribution $\mathcal{N}(\varphi,\sigma^{2})$ with a fixed given variance $\sigma^2$ but undetermined mean $\varphi$. The final aim is to derive the equation for the evolution of the mean opinion $\varphi$ in the spatially heterogeneous case when interactions become localized. During this analysis, we will consider two different cases corresponding to two different types of interaction rates: the original one given in \cite{Bouchaud_etal_JStatMech14} and one inspired from Refs. \cite{Motsch_Tadmor_JSP11, Motsch_Tadmor_preprint}. We show that, asymptotically, the dynamics for the second rate reaches consensus faster in regions of low density of individuals while for high density regions, the dynamics corresponding to the first rate is faster in reaching consensus. The main result is discussed in the next section. As far as we know, this is the first result that derives the macroscopic dynamics for these equations. 

The tools used to carry out the present analysis are borrowed from kinetic theory and hydrodynamic limit techniques which originally were developed to tackle problems from Mathematical Physics. Recently these tools have found applications in the study of emergent phenomena in biological and social systems. Some illustrative examples of this are the study of self-organized collective behaviour in different settings like swarming and flocking \cite{Degond3,Lachowicz1},  fish schools \cite{Degond2, schooling}, ant trail formation \cite{Degond7} or collective cell migration \cite{cells};
evolution of traffic \cite{traffic1} and crowd dynamic \cite{crowd1,Bellomo}; the emergence of languages \cite{language}, cultures \cite{culture}, segregation \cite{schelling_model} or social classes \cite{ben4}. In particular, recently new approaches have been introduced to describe the formation of opinions using mean-field (or kinetic) equations \cite{Wolfram,Toscani1,politic2,meanfield}.

\bigskip

However, the modelling of opinion formation has a long history. One can trace it back to the Condorcet method for voting systems (1785) and more recently to the Fisher-KPP equation (1937) which has found applications in  modelling rumour spreading \cite{KPP}. This was followed in  1971 by Ref. \cite{oldest} that models polarization phenomena in  a society, and in 1982 with the emergence of the concept of sociophysic \cite{Galam2}. One of the first approaches to opinion formation consisted of describing society as a graph, or network, with individuals located at nodes and interacting with their neighbouring nodes.  Such a description fostered links with the Ising model where spins were replaced by an opinion variable. This first approach also encouraged to view opinions as discrete-valued variables.  The main difference between the proposed models  is the rule by which opinions evolve. The interested reader  can find examples in the
Voter's Model \cite{Castellano1}, the Majority Rule Model \cite{Majority_rule} and the Sznajd model \cite{Ising1}.
%
  In opposition to this, in the model treated here opinions are represented by continuum-valued variables and individuals are located in the continuum space $\R^n$.

In opinion formation two major opposing mechanisms are considered: on the one hand, interaction between individuals leading to some type of consensus, and on the other hand, noise that accounts for other factors like self-thinking, media,.... The balance between these two antagonist effects is key to the long-time evolution of opinions and the formation of large scale patterns like, for example, emergence of clusters.
For instance, \cite{Ben4} investigates the formation of clusters under the rule that interacting individuals  adopt the average value of their opinions. 
Here, we also study the balance between these two opposing effects, particularly, we give conditions for consensus to emerge and we study how given interaction rates affects the speed at which consensus is reached.

\bigskip
We will start first with  an extension of the Deffuant-Weisbuch model \cite{Deffiant_weisbuch}. Such models are  applied to the study of voters' intentions and their distance correlations \cite{Bouchaud4} \cite{Bouchaud3}.  For the asymptotic analysis, we consider the individual-based model presented in \cite{Bouchaud_etal_JStatMech14} (spatially heterogeneous version of \cite{Deffiant_weisbuch}) with an interaction rate inspired from
\cite{Motsch_Tadmor_JSP11, Motsch_Tadmor_preprint}. 
 In \cite{Bouchaud_etal_JStatMech14} the authors consider that the interaction rate between individuals is given by a centred Gaussian  evaluated at the distance between the opinions of the two interacting individuals. The authors showed that a phase transition emerges between social dissension and a socially cohesive phase with the mean opinion obeying a diffusion equation at the kinetic level. Here we focus our attention in the weak consensus approximation (i.e., the case where little consensus of opinions is reached after interactions). The equilibria and phase transitions that we obtain are consistent with the results in \cite{Bouchaud_etal_JStatMech14}.

There exist related works in the literature  that present different settings from ours. In Ref. \cite{Wolfram} a similar model, introduced by \cite{Toscani1}, is studied with a constant interaction rate and bounded domain.  In Ref. \cite{REF_DEGOND}, the authors investigate a kinetic equation close to ours on a periodic domain. In Ref. \cite{Toscani1}, the author considers a model where the outcome of a binary interaction depends on each of the individuals' intention but not on their difference of opinions. Finally, in Ref. \cite{Pareshi} the authors consider a model for  wealth dynamics where binary interactions are possible only if the outcome wealth remains positive.

%

\medskip
The paper is structured as follows. In the next section we discuss the main result, namely Theorem \ref{thm:diffusion_approx} where the evolution for the mean opinion dynamics is derived. In Sec. \ref{sec:rate} we present the individual-based model for opinion dynamics in the spatially homogeneous case and consider interactions leading to weak consensus (analogous to the so-called `grazing collision approximation' in gas dynamics). We study the equilibria of these dynamics. Finally, in Sec. \ref{sec:inhomogenous} we consider the spatially heterogeneous case for the previous model and derive the macroscopic equations for the evolution of the mean opinion.

\section{Discussion of the main results}
\label{sec:interpretation_results}

\subsection{Framework}
In the present paper we consider a model where pairs of individuals interact through their opinions or intentions  \eqref{eq:agent}. A parameter $\gamma$ measures how close both opinions become after an interaction. We assume here that $\gamma$ is small and therefore  little consensus is reached during interactions. We will consider that a noise with mean zero and variance $\Sigma^2$ is present in the interactions. Finally, these interactions take place at a given rate depending on how close the opinion of a pair of individuals is. Particularly, the rate is parametrized by $\zeta$ which represents the typical scale at which interactions take place. Under the conditions of Prop. \ref{prop:equi_gauss} we prove that the equilibrium for the corresponding mean-field equation is given by a Gaussian  with fixed variance $\sigma^2$ (given by $\gamma, \zeta, \Sigma^2$) and undetermined mean $\varphi$. In particular, we give criteria on $\gamma, \Sigma^2$ and $\zeta$ for a phase transition to occur between social consensus and social dissent, see Remark \ref{rem:phase_transition}.

We consider two types of rates given by Eqs. \eqref{eq:Hop} and \eqref{eq:rate_inhomo}. The first rate corresponds to symmetric interactions, i.e., the rate at which a pair $(i,j)$ interacts is the same as for the pair $(j,i)$. This rate depends only on the spatial distance and opinion distance between the pair of individuals (i.e., the closer individuals are in space and in opinion, the higher the rate at which they interact). In the other case, the interaction is non-symmetric. In this case, individuals forming a cluster in the space-opinion phase space interact very frequently whereas isolated individuals undergo fewer interactions and when they do interact they tend to interact with the close clusters. If $i$ denotes an isolated individual and $j$ an individual belonging to a close cluster then the influence of $j$ on $i$ will be larger than the influence of $i$ on $j$ by a ratio roughly equal the size of the cluster. This implies that the isolated individual $i$ changes its opinion towards a value closer to the opinion of individual $j$, while the opinion of $j$ does not change. This non symmetric relation is key to explain why this rate gives faster consensus  than the symmetric case in regions of low density, as we will explain later.

\subsection{Conservative properties and entropy}
The evolution for the mean opinion, or in the language of \cite{Bouchaud_etal_JStatMech14} of the ``intention field",  $\varphi=\varphi(\alpha,t)$ (where $\alpha$ is the spatial variable) is given for both rates in Th. \ref{thm:diffusion_approx} in the asymptotic time limit when interactions become localized. The density $\rho$ is constant in time and the density of opinion $\rho \varphi$ evolves according to
\begin{equation}
\label{eq:conservative_form_sym}
\frac{\partial}{\partial t}(\rho\varphi) = C_s \, \nabla_\alpha \cdot \lp \rho^2 \nabla_\alpha \varphi\rp, \quad C_s:=\gamma D \frac{\zeta^3}{(2\sigma_s^2 + \zeta^2)^{3/2}} 
\end{equation}
for the symmetric case and
\begin{equation}
\label{eq:cons_form_asym}
\frac{\partial}{\partial t} (\rho\varphi) = \frac{C_a }{\rho}\, \nabla_\alpha \cdot \lp \rho^2 \nabla_\alpha \varphi\rp, \quad C_a:=\gamma D \frac{\zeta^2}{\sigma_a^2+\zeta^2}.
\end{equation}
From these expressions we investigate conservative and homogenisation properties for the respective solutions.

To begin with, notice that,  for the symmetric case, the \textit{density of opinion} $\rho\varphi$ is a conserved quantity. This is expected since this is a conserved quantity at the kinetic level. The opposite holds true for the non-symmetric case: the non-conservation of $\rho\varphi$ at the kinetic level is maintained in the macroscopic dynamics. Nevertheless, we notice that in the non-symmetric case the value for $\rho^2\varphi$ is conserved (multiplying Eq. \eqref{eq:cons_form_asym} by $\rho$ as it is independent of time). Notice that if transport of individuals was taken into account on the kinetic equations this conservation property would most likely not hold (in particular, $\rho$ would become time-dependent).

Moreover, from Eqs. \eqref{eq:conservative_form_sym} and \eqref{eq:cons_form_asym} we can deduce the following entropy dissipation relations:
\begin{equation}
\label{eq:entropy_sym}
\frac{\partial}{\partial t}\int_{\R^n}\rho \varphi^2 \, d\alpha = - C_s \int_{\R^n} \rho^2 |\nabla_\alpha \varphi|^2\, d\alpha
\end{equation}
for the symmetric case and
\begin{equation}
\label{eq:entropy_asym}
\frac{\partial}{\partial t}\int_{\R^n}\rho^2\varphi^2\, d\alpha = - C_a \int_{\R^n} \rho^2 |\nabla_\alpha \varphi|^2\, d\alpha.
\end{equation}
This shows that for both cases the $L^2(\R^n)$ (weighted) norms on the left hand side decrease over time. When the time derivative reaches zero, the right hand side vanishes which implies that $\nabla_\alpha \varphi =0$ a.e., and so $\varphi$ is constant a.e.. Therefore, the dynamics tend to homogenise the value of the mean opinion, or intention field, $\varphi$.
Precisely absolute consensus takes place when $\varphi$ is constant, i.e., there is no difference in the mean opinion between different spatial regions. Observe that, indeed,  constant functions are stationary solutions to both equations. In the next section, we investigate for which one of the two rates considered consensus is reached faster asymptotically.

\begin{remark}[Conserved quantities and analogy with non-equilibrium thermodynamics]
Notice that the mean intention $\varphi$ is not a conserved quantity, only the density of intention $\rho\varphi$ is (in the symmetric case). Also, Eq. (\ref {eq:conservative_form_sym}) is consistent with Onsager's formalism of non-equilibrium thermodynamics where the time derivatives of the extensive variables (here the mean density of intention)  are balanced by the divergence of fluxes that are linear in the gradients of the intensive variables (here the mean intention itself). This is often referred to as the linear flux-force theory. There exists a duality between intensive and extensive variables through the entropy of the system \cite{Onsager}. This is why, in order to study conservative properties and entropy dissipation relations, we formulate Eqs. \eqref{eq:conservative_form_sym} and \eqref{eq:cons_form_asym}  in Onsager's formalism \cite{Onsager}, i.e.,  by considering the time derivative corresponding to the extensive variable $\rho \varphi$ instead of that of the intensive variable $\varphi$. However, in the non-symmetric case, the formalism of non-equilibrium thermodynamics does not strictly apply since the time derivative of the density of intention $\rho\varphi$ is not balanced by a divergence. Instead, there is the pre-factor $1/\rho$ in front of the divergence which makes the equation non-conservative. Thus, in this case, the standard entropy $\int \rho \varphi^2 \, dx$ as put forward by Onsager needs not be dissipated. But another entropy  $\int \rho^2 \varphi^2 \, dx$ is.
\end{remark}


To conclude this section, we remark that by Prop. \ref{prop:equi_gauss}  the diffusion constants can we rewritten as $C_s=(\sqrt{\zeta^2-\kappa}/\zeta)^3$ and $C_a=(2\zeta^2-\kappa)/(2\zeta^2)$ (with $\kappa= \Sigma^2/\gamma$) and so they stay positive for the same range values of $\kappa$  as that for which the Gaussian  equilibria given in Prop. \ref{prop:equi_gauss} is defined. This shows that the final model is well-posed.

\subsection{Comparison in the speed of consensus}
To compare the speed of consensus given by the two different rates, we recast again the Eqs. \eqref{eq:varphi_diffusion_sym}-\eqref{eq:varphi_diffusion_asym} from Th. \ref{thm:diffusion_approx} into, assuming that $\rho>0$,
\begin{equation} \label{eq:macro_intro}
\frac{\partial \varphi}{\partial t} - C_s\big( \rho \Delta_\alpha \varphi + 2\nabla_\alpha \rho \cdot \nabla_\alpha \varphi \big)=0, \quad C_s:=\gamma D \frac{\zeta^3}{(2\sigma_s^2 + \zeta^2)^{3/2}} 
\end{equation}
for the symmetric case and
\begin{equation} \label{eq:macro_asym_intro}
\frac{\partial \varphi}{\partial t} - C_a \left( \Delta_\alpha \varphi + \frac{2}{\rho}\,\nabla_\alpha  \rho \cdot \nabla_\alpha \varphi\right) =0, \quad C_a:=\gamma D \frac{\zeta^2}{\sigma_a^2+\zeta^2}
\end{equation}
for the non-symmetric case ($\sigma_s$ and $\sigma_a$ are given in Prop. \ref{prop:equi_gauss}). One can check that $C_a>C_s$. In both cases we have that the mean opinion, or intention field, diffuses and is transported in the direction $-\nabla_\alpha \rho$, i.e., from places of high concentration of individuals to places of low concentration. Since absolute consensus takes place when $\varphi$ is constant, the faster the mean opinion $\varphi$ is diffused and transported through space, the faster consensus is reached.

We start by examining regions with low population, say  $\rho\leq 1$. We observe that the diffusive coefficient for the symmetric case, given by $C_s\rho$, is always smaller than that of the non-symmetric case, corresponding to $C_a$ (since $C_a>C_s$). In particular, in this symmetric case, diffusion slows down in regions with lower density of individuals. Also the transport of the mean opinion towards areas of lower density is slower in the symmetric case since this speed corresponds to $C_s$ whereas in the non-symmetric case it corresponds to $C_a/\rho$. Notice then that, in the non-symmetric case, in low density regions this transport takes place very fast. However in the non-symmetric case, interaction rates are scale invariant with the local density while in the symmetric case they are homogeneous of degree one with the density. So, in the non-symmetric case, agents in low density regions interact at the same rate as in the high density regions while in the symmetric case, they interact at a much lower rate. The effect is opposite in the large density regions. 

In conclusion, from Th. \ref{thm:diffusion_approx} we deduce
\begin{corollary}
\label{cor:consensus}
Low density regions reach consensus faster in the non-symmetric rate case. By contrast, the large density regions reach consensus faster in the symmetric case. The crossover density between these two behaviours is given by $C_{s}\rho=C_{a}$, i.e. 
\begin{equation} \label{eq:crossover_density}
\rho=\frac{C_{a}}{C_{s}}.
\end{equation}
\end{corollary}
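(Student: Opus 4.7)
The plan is to extract the effective diffusion and transport coefficients from \eqref{eq:macro_intro}--\eqref{eq:macro_asym_intro} and compare them pointwise in $\rho$. Writing each equation in the common form $\partial_t \varphi = D(\rho) \Delta_\alpha \varphi + V(\rho,\nabla_\alpha \rho) \cdot \nabla_\alpha \varphi$, one reads off $D_s(\rho) = C_s \rho$ and $V_s = 2 C_s \nabla_\alpha \rho$ in the symmetric case, and $D_a(\rho) = C_a$ and $V_a = (2 C_a/\rho)\nabla_\alpha \rho$ in the non-symmetric case.

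To quantify the speed at which consensus (i.e.\ spatial constancy of $\varphi$) is achieved, I would localise the analysis to a region where $\rho \approx \rho_0$ is nearly constant. There each equation reduces to a heat equation with effective diffusion $D_s(\rho_0) = C_s \rho_0$ or $D_a(\rho_0) = C_a$, and a spatial perturbation of wave number $k$ decays at rate $D(\rho_0) |k|^2$. Consensus is therefore reached faster in the equation whose diffusion coefficient is larger. Comparing $C_s \rho_0$ against $C_a$ gives $C_s \rho_0 < C_a \iff \rho_0 < C_a/C_s$ (non-symmetric rate wins in low density regions), $C_s \rho_0 > C_a \iff \rho_0 > C_a/C_s$ (symmetric rate wins in high density regions), and equality at the crossover $\rho_0 = C_a/C_s$, which is exactly \eqref{eq:crossover_density}. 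As a consistency check, the drift coefficients $2 C_s$ and $2 C_a/\rho$ in front of $\nabla_\alpha \rho \cdot \nabla_\alpha \varphi$ intersect at the same threshold $\rho = C_a/C_s$, so the crossover is shared by the diffusive and convective mechanisms that homogenise $\varphi$.

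A more rigorous variant uses the entropy identities \eqref{eq:entropy_sym}--\eqref{eq:entropy_asym}: both dissipations have integrand $\rho^2 |\nabla_\alpha \varphi|^2$ weighted respectively by $C_s$ and $C_a$, while the dissipated functionals are $\int_{\R^n} \rho \varphi^2 \, d\alpha$ and $\int_{\R^n} \rho^2 \varphi^2 \, d\alpha$. Localising at $\rho \approx \rho_0$ produces effective relaxation rates of $C_s \rho_0$ and $C_a$ respectively (e.g.\ via a Poincar\'e-type inequality on a bounded sub-domain), again giving the same crossover. The only genuine obstacle is pinning down what ``faster consensus'' means, since the statement is qualitative; once one agrees it refers to the leading-order local relaxation rate of inhomogeneities of $\varphi$, the proof reduces to tracking the sign of the scalar quantity $C_s \rho - C_a$.
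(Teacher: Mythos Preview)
Your proposal is correct and follows essentially the same approach as the paper: both arguments extract the effective diffusion coefficients $C_s\rho$ and $C_a$ (and the transport coefficients) from \eqref{eq:macro_intro}--\eqref{eq:macro_asym_intro} and compare them pointwise, identifying the crossover where $C_s\rho = C_a$. Your additional localisation argument and the entropy-based variant go a bit beyond the paper's purely heuristic discussion, but the core reasoning is the same.
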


\medskip

This crossover in the speed to consensus can be understood heuristically as follows. In the symmetric case any pair of individuals have the same influence on each other. Contrary to this, in the non-symmetric case,
individuals belonging to a cluster (high density region) influence the opinion of isolated individuals (low density regions) but not the other way around, this is precisely due to the non-symmetric nature of the rate. As a consequence, in the non-symmetric case, because clusters have more influence on isolated individuals, isolated individuals adopt quickly the opinion of the cluster. This can be seen by examining the role of the convection term (the third term on the left hand side of \eqref{eq:macro_intro} and \eqref{eq:macro_asym_intro}). Indeed this convection term shows a transport of the mean opinion in the direction of the gradient for $\rho$,  i.e. from places of high concentration to places of low concentration. Actually this convection takes place much faster in low density regions in the non-symmetric case than in the symmetric case. Now, in high density regions, in the non-symmetric case, individuals of a cluster tend to interact very frequently among themselves but they interact less frequently with members of other clusters than in the symmetric case. This is why for higher density regions the symmetric rate reaches consensus faster: there are more interactions (exchange of opinions) between clusters.

\setcounter{equation}{0}
\section{Rate model and weak consensus approximation}
\label{sec:rate}

\subsection{Kinetic model: symmetric and non-symmetric cases}

We first consider a spatially homogeneous model for opinion formation. Denote by $\phi_i(t) \in {\mathbb R}$ the opinion of agent $i$ at time $t$; for example, it could represent the intention of agent $i$ to vote for a particular party as in \cite{Bouchaud_etal_JStatMech14}. When the pair of agents $(i,j)$ interact, they exchange opinions according to the following rule $(\phi_i,\phi_j) \to (\phi'_i,\phi_j)$ with 
\begin{equation}
 \phi'_i = \phi_i + \gamma (\phi_j - \phi_i) + \eta_i, 
\label{eq:agent}
\end{equation}
where $0 \leq \gamma \leq \frac{1}{2}$ is a constant which measures how opinions get closer after an interaction takes place. For each $i=1,\hdots,N$, where $N$ is the number of agents, $(\eta_i)_{i=1,\hdots,N}$ are independent, identically distributed random variables distributed according to a probability density $q=q(\eta)$ with zero mean, variance $\Sigma^2$ and decaying moments. This noise source accounts for other influences.
The interaction rate between a pair of individuals, given by $G_\zeta(\phi) =G(|\phi_i - \phi_j|/\zeta)$, is dependent on the distance between their opinions and is a decreasing function of its argument. In other words: individuals with far-away opinions interact at a lower rate (as in \cite{Deffiant_weisbuch}).
The parameter $\zeta$ is a typical scale in the sense that individuals with opinions distant by a quantity greater than $\zeta$ have a probability close to zero to interact. Here, we will consider two types of interaction rates which we write compactly as:
$$\frac{G_\zeta(|\phi_i-\phi_j|)}{\Hop\lp\frac{1}{N}\sum_{k=1}^N G_\zeta(|\phi_i-\phi_k|) \rp}$$
where $\Hop$ is a function that can correspond to either the constant 1 or the identity operator depending on the case we are interested in:
\begin{equation} \label{eq:Hop}
\begin{split}
& \ \ \ \ \ \ \ \ \ \Hop:\  g\in[0,\infty) \rightarrow \Hop(g) \in [0,\infty) \\ \\
&\Hop(g) = \left\{\begin{array}{ll}
1, \ \forall g \ge 0, \mbox{ i.e.} \ \Hop =1 & \mbox{(symmetric case)}\\
 \ \ \mbox{     or,}\\
\mbox{g}, \ \forall g \ge 0, \mbox{ i.e.} \ \Hop =\mbox{Id} & \mbox{(non-symmetric case)}.
\end{array} \right.
\end{split}
\end{equation}
 If $\Hop\equiv 1$, then the rate of interactions between two agents is symmetric. On the contrary, if $\Hop=\mbox{Id}$, the rate of interactions for  pair $(i, j)$ is not the same  as for pair $(j,i)$. 
In particular, the non-symmetric rate case is inspired by  the models in Refs. \cite{Motsch_Tadmor_JSP11, Motsch_Tadmor_preprint}.

We denote by $f=f(\phi,t)$ the probability distribution of agent's intentions $\phi$ at time $t$. According to the previous rule, and assuming propagation of chaos (see remark \ref{rem:SDE} below and Ref. \cite{Cergignani_illner}), the evolution of $f$ is given for any test function $g$, by: 
\begin{eqnarray}
&&\hspace{-1cm}
\frac{d}{d t} \int_{\R} f g \, d \phi = \int_{\R^3} \Big[ g \big(\phi + \gamma (\psi-\phi) + \eta \big) - g (\phi) \Big] \nonumber \\
&&\hspace{5cm}
 f(\phi,t) \, f(\psi,t) \, \frac{G_\zeta\lp |\phi-\psi|\rp}{\Hop(G_\zeta*f)}  \, q(\eta) \, d \eta \, d \phi \, d \psi.
\label{eq:rate}
\end{eqnarray}

\begin{remark}
The original interaction model presented in \cite{Bouchaud_etal_JStatMech14} considers the symmetric interaction rate with interactions rules $(\phi_i,\phi_j) \to (\phi_i',\phi_j')$ given by 
\begin{eqnarray*} 
 \phi'_i = \phi_i + \gamma (\phi_j - \phi_i) + \eta_i, \\ \phi'_j = \phi_j + \gamma (\phi_i - \phi_j) + \eta_j, 
\end{eqnarray*}
$0 \leq \gamma \leq \frac{1}{2}$. However, the kinetic equation for this system corresponds precisely to  \eqref{eq:rate} (up to a factor $2$ due to the rate being counted twice). This is precisely a consequence of the symmetry of the interactions. Therefore, for the following analysis one can consider either of the two interacting processes. 
\end{remark}

\subsection{Approximation under weak consensus assumption}
\label{sec:weak_consensus}
Next we assume that the \textit{consensus parameter} $\gamma$ is small, meaning that there is weak consensus during interactions. This is analogous to the so-called `grazing collisions' in gas dynamics \cite{Degond_Lucquin_1992}. To prevent the noise term to dominate the dynamics, we assume too that $\Sigma^2$ is also small and of the same order of magnitude. In particular, we define $\kappa$ as:
\begin{equation} \label{eq:def_kappa}
 \Sigma^2 = \kappa \gamma, \qquad \kappa = \mbox{Constant}. 
 \end{equation}

Under these assumptions, we Taylor expand the bracket in \eqref{eq:rate_inhomo} to obtain:
\begin{eqnarray}
&&\hspace{-1cm}
\frac{d}{d t} \int_{\R} f g \, d \phi = \gamma \int_{\R^2} \Big[   g' (\phi) \, (\psi-\phi) + \frac{\kappa}{2} \,  g'' (\phi)  \Big] \nonumber \\
&&\hspace{5cm}
 f(\phi,t) \, f(\psi,t) \,  \frac{G_\zeta\lp |\phi-\psi|\rp}{\Hop(G_\zeta*f)} \, d \phi \, d \psi, 
\label{eq:grazing}
\end{eqnarray}
where we have dropped the terms $\mathcal{O}(\gamma^2)$, $\mathcal{O}(\Sigma^2)$ and the ones with higher moments of $q$.
The equation for $f$ obtained after dropping the low order terms approximates the evolution of the opinion dynamics under the weak consensus assumption. This is in line with the grazing collision approximation in gas dynamics.

 Using integration by parts, we get the strong form of the Fokker-Planck equation for~$f$: 
\begin{eqnarray}
&&\hspace{-1cm}
\frac{\partial f}{\partial t}  =  Q(f), \label{eq:kineticFP} \\
&&\hspace{-1cm}
Q(f) = \gamma\, \partial_\phi \left\{ \frac{( \phi G_\zeta ) * f }{\Hop( G_\zeta*f)} \, f + \frac{\kappa}{2} \partial_\phi \lp \frac{(G_\zeta*f)}{\Hop(G_\zeta*f)} \, f \rp \right\}.
\label{eq:FP}
\end{eqnarray}

\begin{remark} 
\label{rem:SDE}
We can directly derive this equation from a stochastic interacting particle system. 
We introduce the following system for the intention $\phi_i(t)$ of agent $i$:
\begin{eqnarray*}
 d\phi_i &=& \gamma  \left[  \frac{\frac{1}{N}\sum_{j=1}^N (\phi_j - \phi_i) G_\zeta (\phi_j - \phi_i)}{\Hop\lp\frac{1}{N}\sum_{k=1}^N G_\zeta(|\phi_i-\phi_j|) \rp}\right] \, dt + \sqrt{\gamma\kappa \lp \frac{\frac{1}{N}  \sum_{j=1}^N G_\zeta (\phi_j - \phi_i)}{\Hop\lp\frac{1}{N}\sum_{k=1}^N G_\zeta(|\phi_i-\phi_j|) \rp}\rp} \ dB_t^i , 
 \end{eqnarray*}
 where $B_t^i$ are independent standard Brownian motions. We assume that the initial data $\phi_i(0)$ are drawn independently out of identical probability densities $f_0(\phi)$. We notice that the system is invariant under permutations of indices $i$. Hence the $\phi_i(t)$ are indistinguishable. Introducing the one and two-particle marginal distributions $f_N^{(1)}(\phi)$ and $f_N^{(2)}(\phi,\psi)$, and denoting by $g(\phi)$ any test function, we have:
\begin{eqnarray*}
&&\hspace{-1cm}
\frac{d}{dt} \int_{\mathbb R} g(\phi) \, f_N^{(1)}(\phi,t) \, d \phi = \\
&&\hspace{1cm}
= \gamma \frac{N-1}{N} \int_{{\mathbb R}^2} \Big[ g'(\phi) \, (\psi-\phi) \, + \frac{\kappa}{2} g''(\phi) \Big] \frac{G_\zeta (\psi-\phi)}{\Hop(G_\zeta *f)} \, f_N^{(2)}(\phi, \psi,t) \, d \phi \, d \psi. 
\end{eqnarray*}
Assuming propagation of chaos in the limit $N \to \infty$, namely there exists a one-particle distribution $f(\phi,t)$ such that $f_N^{(1)}(\phi,t) \to f(\phi,t)$ and $f_N^{(2)}(\phi, \psi,t) \to f(\phi,t)f(\psi,t)$, we get
\begin{eqnarray*}
&&\hspace{-1cm}
\frac{d}{dt} \int_{\mathbb R} g(\phi) \, f(\phi,t) \, d \phi = \\
&&\hspace{1cm}
= \gamma \int_{{\mathbb R}^2} \Big[ g'(\phi) \, (\psi-\phi) \, + \frac{\kappa}{2} g''(\phi) \Big] \frac{G_\zeta (\psi-\phi)}{\Hop(G_\zeta *f)} \,  f(\phi,t)f(\psi,t) \, d \phi \, d \psi, 
\end{eqnarray*}
which is the same as (\ref{eq:grazing}) without the low order terms.

\end{remark}

We can write (\ref{eq:FP}) in a more convenient way by introducing a potential $V_f(\phi)$ such that 
\begin{eqnarray}
&&\hspace{-1cm}
\partial_\phi V_f = \frac{( \phi G_\zeta ) * f}{G_\zeta*f}, 
\label{eq:potential}
\end{eqnarray}
and a Gibbs distribution
\begin{eqnarray}
&&\hspace{-1cm}
M_f (\phi)= \frac{1}{Z_f} \exp \Big( - \frac{2 V_f(\phi)}{\kappa} \Big), \qquad Z_f = \int_{\R} \exp \Big( - \frac{2 V_f(\phi)}{\kappa} \Big) \, d \phi. 
\label{eq:gibbs}
\end{eqnarray}
We recast (\ref{eq:FP}) into:
\begin{eqnarray}
&&\hspace{-1cm}
Q(f) = \gamma\frac{\kappa}{2}\, \partial_\phi \left\{ M_f \, \partial_\phi \lp \frac{(G_\zeta*f)}{\Hop(G_\zeta*f)} \, \frac{f}{M_f} \rp \right\}, 
\label{eq:FP2}
\end{eqnarray}
which in the symmetric case ($\Hop\equiv 1$) corresponds to
$$Q(f) = \gamma\frac{\kappa}{2} \,\partial_\phi \left\{ M_f  \partial_\phi \lp \frac{(G_\zeta*f)\, f}{M_f} \rp \right\}, $$
and for the non-symmetric case ($\Hop=\mbox{Id}$):
\begin{equation}\label{eq:Q_asym}
Q(f) = \gamma\frac{\kappa}{2}\, \partial_\phi \left\{ M_f  \partial_\phi \lp \frac{f}{M_f} \rp \right\}. 
\end{equation}
Remember that $\kappa \gamma = \Sigma^2$ which is assumed to be small.

\subsection{Equilibria and phase transition}
\label{sub:equi_sym}

In this section we investigate the equilibria for the operator $Q$ in Eq. \eqref{eq:FP2}.
In the general case, we have the

\begin{proposition}
$f_{\mbox{\scriptsize eq}}$ is an equilibrium, i.e. a solution of $Q(f_{\mbox{\scriptsize eq}}) = 0$, if and only if $f$ satisfies
\begin{itemize}
\item[i)] in the symmetric case:
\begin{eqnarray}
&&\hspace{-1cm}
(G_\zeta*f) \, f = B_f M_f, \quad  B_f =  \int_{\R} (G_\zeta*f) \, f \, d \phi;
\label{eq:equi}
\end{eqnarray}
\item[ii)] in the non-symmetric case:
\begin{equation}
f = M_f.
\label{eq:equi_asym}
\end{equation}
\end{itemize}
\label{prop:equi_gene}
\end{proposition}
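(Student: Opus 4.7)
My plan is to exploit the self-adjoint, Fokker--Planck structure already made explicit in equation \eqref{eq:FP2}, namely
\[
 Q(f) = \gamma\frac{\kappa}{2}\,\partial_\phi\!\left\{ M_f\,\partial_\phi h_f\right\}, \qquad h_f := \frac{(G_\zeta * f)}{\Hop(G_\zeta*f)}\,\frac{f}{M_f}.
\]
The natural move is to multiply $Q(f_{\mbox{\scriptsize eq}})=0$ by $h_{f_{\mbox{\scriptsize eq}}}$ and integrate over $\mathbb{R}$. Integration by parts (with vanishing boundary terms, which is justified from the decay of $f_{\mbox{\scriptsize eq}}$ and the exponential decay of $M_{f_{\mbox{\scriptsize eq}}}$, since $V_{f_{\mbox{\scriptsize eq}}}$ grows quadratically at infinity, as will be seen from \eqref{eq:potential}) will produce
\[
 0 = -\frac{\gamma\kappa}{2}\int_{\mathbb{R}} M_{f_{\mbox{\scriptsize eq}}}\,\bigl(\partial_\phi h_{f_{\mbox{\scriptsize eq}}}\bigr)^2\,d\phi.
\]
Since $M_f>0$ everywhere (being a Gibbs density), this forces $\partial_\phi h_{f_{\mbox{\scriptsize eq}}}=0$, i.e.\ $h_{f_{\mbox{\scriptsize eq}}}$ is a constant.

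It then remains to identify this constant in each case and to check the converse. In the symmetric case $\Hop\equiv 1$ we have $h_f = (G_\zeta*f)\,f/M_f$, so $h_{f_{\mbox{\scriptsize eq}}}\equiv C$ means $(G_\zeta*f_{\mbox{\scriptsize eq}})\,f_{\mbox{\scriptsize eq}} = C M_{f_{\mbox{\scriptsize eq}}}$. Integrating both sides over $\phi$ and using that $M_{f_{\mbox{\scriptsize eq}}}$ is a probability density by construction \eqref{eq:gibbs}, I identify $C=B_{f_{\mbox{\scriptsize eq}}}$, which is exactly \eqref{eq:equi}. In the non-symmetric case $\Hop=\mbox{Id}$ the prefactor simplifies to $1$ (assuming $G_\zeta*f_{\mbox{\scriptsize eq}}>0$, which holds since $G_\zeta>0$ and $\int f_{\mbox{\scriptsize eq}}=1$), so $h_f = f/M_f$ and $h_{f_{\mbox{\scriptsize eq}}}\equiv C$. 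Integrating and using that both $f_{\mbox{\scriptsize eq}}$ and $M_{f_{\mbox{\scriptsize eq}}}$ are probability densities yields $C=1$, i.e.\ $f_{\mbox{\scriptsize eq}}=M_{f_{\mbox{\scriptsize eq}}}$, which is \eqref{eq:equi_asym}.

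For the converse, both \eqref{eq:equi} and \eqref{eq:equi_asym} state exactly that $h_{f_{\mbox{\scriptsize eq}}}$ is constant, and plugging back into \eqref{eq:FP2} gives $Q(f_{\mbox{\scriptsize eq}})=0$ immediately. The main obstacle I anticipate is the justification of the boundary terms when integrating by parts; this requires an a priori statement that any admissible equilibrium (a nonnegative integrable density with sufficient decay so that $V_{f_{\mbox{\scriptsize eq}}}$ is well defined) makes $M_{f_{\mbox{\scriptsize eq}}}\,h_{f_{\mbox{\scriptsize eq}}}\,\partial_\phi h_{f_{\mbox{\scriptsize eq}}}$ vanish at $\pm\infty$. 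This will be handled by noting that the convolution $G_\zeta*f$ is bounded, while $M_{f_{\mbox{\scriptsize eq}}}$ and $f_{\mbox{\scriptsize eq}}$ themselves must decay, so the standard regularity/decay assumptions on equilibria suffice. Everything else is an algebraic identification of constants.
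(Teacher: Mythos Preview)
Your proposal is correct and follows essentially the same route as the paper: multiply $Q(f)$ by $h_f=\frac{(G_\zeta*f)}{\Hop(G_\zeta*f)}\frac{f}{M_f}$, integrate by parts to obtain the sign-definite identity $\int Q(f)\,h_f\,d\phi=-\frac{\gamma\kappa}{2}\int M_f\,|\partial_\phi h_f|^2\,d\phi$, and conclude that $h_f$ is constant with the constant fixed by $\int M_f=1$. Your write-up is in fact more detailed than the paper's, spelling out the identification of the constant in each case, the converse, and the boundary-term justification.
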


\medskip
\noindent
\begin{proof} Integration by parts leads to 
\begin{eqnarray*}
&&\hspace{-1cm}
\int_{\R} Q(f) \frac{(G_\zeta * f)}{\Hop(G_\zeta *f)} \, \frac{f}{M_f} \, d \phi = -\gamma \frac{\kappa}{2} \int_{\R} M_f \, \Big| \partial_\phi \Big( \frac{(G_\zeta*f)}{\Hop(G_\zeta *f)} \, \frac{f}{M_f} \Big) \Big|^2 \, d \phi.  
\end{eqnarray*}
Then $Q(f) = 0$ iff $\frac{(G_\zeta*f)}{\Hop(G_\zeta*f)} \, \frac{f}{M_f}$ is a constant. This constant is given by the constraint $\int M_f =1$.
\end{proof}

Proving the existence of a fixed point for Eqs. \eqref{eq:equi} and \eqref{eq:equi_asym} is in general challenging. We will focus our attention on the Gaussian  case where. 
\begin{equation}
G(u) = e^{-u^2/2}.
\label{eq:gGauss}
\end{equation}
 In this case, we have the
\begin{proposition}
Assume that the interaction rate $G$ is given by \eqref{eq:gGauss}. Then Gaussian  functions $f_{\mbox{\scriptsize eq}} = F_{\sigma, \varphi}$, with 
\begin{equation}
F_{\sigma, \varphi} (\phi) = \frac{1}{\sqrt{2 \pi} \sigma} \exp \big( - \frac{|\phi-\varphi|^2}{2 \sigma^2} \big) , 
\label{eq:FGauss}
\end{equation}
are equilibria, provided that,
\begin{itemize}
\item[i)] in the symmetric case, $\sigma=\sigma_s$ with
\begin{equation}
\sigma_s^2 =  \frac{\zeta^2}{2\lp\frac{\zeta^2}{\kappa} - 1\rp};
\label{eq:sigma}
\end{equation}
\item[ii)] in the non-symmetric case, $\sigma=\sigma_a$ with 
\begin{equation}
\sigma_a^2 =  \frac{\zeta^2}{\frac{2\zeta^2}{\kappa} -1}. 
\label{eq:sigma_asym}
\end{equation}
\end{itemize}
Therefore, Gaussian  equilibria exist only for $\kappa$ such that, 
\begin{itemize}
\item[i)] in the symmetric case,
\begin{equation}
\kappa < \kappa_{c,s}(\zeta)= \zeta^2;
\label{eq:kappa_c}
\end{equation}
\item[ii)] in the non-symmetric case,
\begin{equation} \label{eq:kappa_c_asym}
\kappa < \kappa_{c,a}(\zeta)= 2 \zeta^2.
\end{equation} 
\end{itemize}
\label{prop:equi_gauss}
\end{proposition}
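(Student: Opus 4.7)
The plan is a direct verification: substitute the Gaussian ansatz $F_{\sigma,\varphi}$ into Proposition~\ref{prop:equi_gene} and determine the value of $\sigma$ for which the equations hold. The computation reduces to routine Gaussian convolution algebra; the only real content is matching variances in two quadratic exponents.

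First I would compute the two convolutions that appear in the potential $V_f$. Since $G_\zeta(\phi)=\exp(-\phi^2/(2\zeta^2))$ and $F_{\sigma,\varphi}$ is a normalized Gaussian, a completion of squares gives
\begin{equation*}
(G_\zeta\ast F_{\sigma,\varphi})(\phi)=\frac{\zeta}{\sqrt{\zeta^2+\sigma^2}}\exp\!\Bigl(-\frac{(\phi-\varphi)^2}{2(\zeta^2+\sigma^2)}\Bigr),
\end{equation*}
and a similar computation (differentiating the previous identity in $\varphi$, or writing the integrand as $(\phi-\psi)\,G_\zeta(\phi-\psi)F_{\sigma,\varphi}(\psi)$) yields
\begin{equation*}
((\phi G_\zeta)\ast F_{\sigma,\varphi})(\phi)=\frac{\zeta^3(\phi-\varphi)}{(\zeta^2+\sigma^2)^{3/2}}\exp\!\Bigl(-\frac{(\phi-\varphi)^2}{2(\zeta^2+\sigma^2)}\Bigr).
\end{equation*}
Taking the ratio, \eqref{eq:potential} gives $\partial_\phi V_{F_{\sigma,\varphi}}=\zeta^2(\phi-\varphi)/(\zeta^2+\sigma^2)$, so up to an additive constant
\begin{equation*}
V_{F_{\sigma,\varphi}}(\phi)=\frac{\zeta^2(\phi-\varphi)^2}{2(\zeta^2+\sigma^2)},
\end{equation*}
and therefore by \eqref{eq:gibbs}, $M_{F_{\sigma,\varphi}}$ is itself Gaussian, centered at $\varphi$, with variance $\sigma_M^2:=\kappa(\zeta^2+\sigma^2)/(2\zeta^2)$.

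Next I would plug into the two equilibrium conditions of Proposition~\ref{prop:equi_gene}. In the non-symmetric case \eqref{eq:equi_asym} requires $F_{\sigma,\varphi}=M_{F_{\sigma,\varphi}}$, i.e.\ $\sigma^2=\sigma_M^2$. Solving $2\zeta^2\sigma^2=\kappa(\zeta^2+\sigma^2)$ gives exactly \eqref{eq:sigma_asym}, and the positivity $\sigma^2>0$ is equivalent to $2\zeta^2-\kappa>0$, yielding the threshold \eqref{eq:kappa_c_asym}. In the symmetric case, the left-hand side of \eqref{eq:equi} is the product of two Gaussians centered at $\varphi$, hence again a Gaussian centered at $\varphi$ but with variance $\sigma_L^2:=\sigma^2(\zeta^2+\sigma^2)/(\zeta^2+2\sigma^2)$. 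Since $M_{F_{\sigma,\varphi}}$ is also Gaussian centered at $\varphi$, both sides of \eqref{eq:equi} are Gaussians centered at $\varphi$; equality of the normalization constants is automatic from the definition of $B_f$, so the condition reduces to $\sigma_L^2=\sigma_M^2$. This yields $2\zeta^2\sigma^2=\kappa(\zeta^2+2\sigma^2)$, hence \eqref{eq:sigma}, with positivity $\zeta^2-\kappa>0$ giving \eqref{eq:kappa_c}.

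The only step that requires any care is the second convolution and the subsequent potential computation, but these are standard Gaussian manipulations. I do not foresee any genuine obstacle; the real point of the proposition is the bifurcation character of the critical thresholds $\kappa_{c,s}=\zeta^2$ and $\kappa_{c,a}=2\zeta^2$, which emerge transparently from the denominators in the two formulas for $\sigma^2$.
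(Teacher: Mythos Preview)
Your proof is correct and follows essentially the same approach as the paper: both substitute the Gaussian ansatz into the equilibrium characterization of Proposition~\ref{prop:equi_gene} and reduce everything to matching variances via Gaussian convolution identities. The only organizational difference is that the paper first derives the general formula $M_f \propto (G_\zeta*f)^{2\zeta^2/\kappa}$ (valid for arbitrary $f$, from $\phi G_\zeta = -\zeta^2\partial_\phi G_\zeta$) and then specializes to $f=F_{\sigma,\varphi}$, whereas you plug in the Gaussian ansatz from the outset; the resulting algebraic conditions on $\sigma^2$ are identical.
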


\begin{remark}[Constraints and phase transition]
\label{rem:phase_transition}
\mbox{}
\begin{itemize}
\item The constraints in Eqs. \eqref{eq:kappa_c} and \eqref{eq:kappa_c_asym} give a critical value of $\kappa$ for which a phase transition takes place between existence and non-existence of Gaussian  equilibria. Notice that when $\kappa$ approaches its critical value the variance of the equilibria diverges. Therefore, opinions spread towards dissension. Otherwise, a consensus arises. On the contrary, if the noise strength $\Sigma^2 \to 0$ then $\sigma \to 0$ and we expect to converge to a Dirac-delta distribution. The phase transition that we observe  here corresponds to the one proven in  Ref. \cite{Bouchaud_etal_JStatMech14} when considering the grazing collision case. 

\item It is easy to check that $\sigma_s>\sigma_a$, therefore, opinions in the symmetric case are more spread around their mean than in the non-symmetric one. 

\item  Replacing the value of $\kappa$ in Eq.  \eqref{eq:def_kappa} we can rewrite Eqs. \eqref{eq:kappa_c} and \eqref{eq:kappa_c_asym} as
$$\Sigma^2 < c_0\,\zeta^2 \gamma$$
for $c_0=1$ in the symmetric case and $c_0=2$ in the non-symmetric case. Notice that for higher values of $\gamma$ and $\zeta$, we expect more consensus and, therefore, condensation of the opinions; the larger $\gamma$ is, the closer the agents get in their opinions and the larger the typical interaction range $\zeta$ is, the more interactions take place and the faster a consensus can be reached. The product of these two quantities must bound the strength of the noise $\Sigma^2$.

\medskip

Notice that in \cite{Bouchaud_etal_JStatMech14} the constraint for the symmetric case takes the form:
$$\Sigma^2<\zeta^2\gamma(1-\gamma).$$
This is consistent with our result as in Section \ref{sec:weak_consensus}
we assumed weak consensus interactions and drop terms of order $\gamma^2$.

\end{itemize}

\end{remark}

In the sequel we will use repeatedly the following identities (expressed in the notation of Eq. \eqref{eq:FGauss}):
\begin{eqnarray} \label{eq:product_Gaussian s}
F_{\sigma,0}F_{\eta,0} &=& \frac{1}{\sqrt{2\pi (\sigma^2+\eta^2)}}\, F_{\sqrt{\frac{\sigma^2\eta^2}{\sigma^2+\eta^2}},0}\, ,\\
F_{\sigma,\varphi}*F_{\eta,\mu} &=& F_{\sqrt{\sigma^2+\eta^2}, \varphi+\mu}\, , \label{eq:convolution_Gaussian s}\\
\partial_\varphi(f*g)&=&(\partial_\varphi f *g)=(f*\partial_\varphi g). \label{eq:convolution_derivative}
\end{eqnarray}

Notice that in the Gaussian  case \eqref{eq:gGauss} we can express $G_\zeta$ as:
\begin{equation} \label{eq:Gzeta}
G_\zeta= \sqrt{2\pi}\zeta\, F_{\zeta, 0}.
\end{equation}

\begin{proof}[Proof of Proposition \ref{prop:equi_gauss}] In the case \eqref{eq:gGauss} we have $\phi G_\zeta = - \zeta^2 \partial_\phi G_\zeta$ and so
$$ \frac{( \phi G_\zeta ) * f}{G_\zeta*f} = - \zeta^2 \partial_\phi \log (G_\zeta*f)$$
which implies that 
$$ V_f = - \zeta^2 \, \log (G_\zeta*f) + \mbox{Constant}. $$
This implies that 
\begin{equation} \label{eq:aux}
 M_f = \frac{1}{C_f} (G_\zeta*f)^{\frac{2 \zeta^2}{\kappa}}, \quad C_f = \int_{\R} (G_\zeta*f)^{\frac{2 \zeta^2}{\kappa}} \, d \phi . 
 \end{equation}
For the symmetric case, inserting this into \eqref{eq:equi}, we get:
$$ (G_\zeta*f) \, f = \frac{B_f}{C_f} (G_\zeta*f)^{\frac{2 \zeta^2}{\kappa}}, $$
or  
$$ f = \frac{B_f}{C_f} (G_\zeta*f)^{\frac{2 \zeta^2}{\kappa}  -1}. $$
Now, one can check using \eqref{eq:convolution_Gaussian s} and \eqref{eq:Gzeta} that $f =  F_{\sigma, \varphi}$ is a solution
provided that 
$$ \sigma^2 = \frac{\zeta^2 + \sigma^2}{\frac{2 \zeta^2}{\kappa}  -1}, $$
which leads to \eqref{eq:sigma}. The condition for $\sigma^2>0$ is that $\frac{\zeta^2}{\kappa} - 1 >0$ which leads to the constraint $\kappa < \zeta^2$.

The non-symmetric case is dealt with analogously by inserting expression \eqref{eq:aux} into \eqref{eq:equi_asym}.
\end{proof}

\section{Space inhomogeneous model and continuum limit}
\label{sec:inhomogenous}
In this section we consider the space inhomogeneous version of the opinion formation model and investigate the evolution of the mean opinion dynamic when interactions become localized in space. This is done in the spirit of hydrodynamic limits for kinetic equations.

\subsection{Derivation of the kinetic model}
\label{sec:symmetric_kinetic}

In this section, we assume that the agents are endowed with a spatial variable $\alpha \in {\mathbb R}^n$ (with $n \in \{1,2,3\}$)  which can be the geographical distance or any other social metric. Then, each agent labelled $i$ ($i \in \{1,\ldots, N\}$) is described by its location variable $\alpha_i(t)$ and its opinion $\phi_i(t)$ at time $t$. Inspired by Ref.  \cite{Bouchaud_etal_JStatMech14}, we can interpret $\alpha_i$  as the city of voter $i$. It interacts with voter $j$ with opinion $\phi_j$ and resident in city $\alpha_j$ according to rule \eqref{eq:agent} with rate 
$$\frac{G_\zeta(\phi_i - \phi_j) \, F_\varepsilon(\alpha_i - \alpha_j)}{\Hop\lp\frac{1}{N}\sum_{k=1}^N G_\zeta(\phi_i-\phi_k)F_\eps(\alpha_i-\alpha_k) \rp} ,$$ 
with 
$$ G_\zeta (\phi) = G\big( \frac{|\phi|}{\zeta} \big), \qquad F_\varepsilon(\alpha) = \frac{1}{\varepsilon^n}  F\big( \frac{|\alpha|}{\varepsilon} \big), $$
and $\Hop$ given in Eq. \eqref{eq:Hop}.
The parameter $\varepsilon$ is supposed to be a measure of the interaction range in the position variable $\alpha$. We subsequently normalize $F$ such that 
$$ \int_{\R^n} F(|\alpha|) \, d\alpha = 1, $$
and denote by $D$ the constant 
\begin{equation} \label{eq:D}
 D = \frac{1}{2n} \int_{\R^n} F(|\alpha|) \, |\alpha|^2 \, d\alpha. 
 \end{equation}

Let $f=f(\alpha, \phi, t)$ be the probability density of the agents in the $(\alpha,\phi)$ space at time $t$. Following the same reasoning as previously, we can write for any test function $g=g(\alpha, \phi)$: 
\begin{eqnarray}
&&\hspace{-1cm}
\frac{d}{d t} \int_{\R^{n+1}} f g \, d\alpha \, d \phi = \int_{\R^{2n+3}} \Big[ g \big(\alpha,\phi + \gamma (\psi-\phi) + \eta \big) - g (\alpha,\phi) \Big] \label{eq:rate_inhomo} \\
&&\hspace{2cm}
 f(\alpha, \phi,t) \, f(\beta, \psi,t) \, \frac{G_\zeta (|\phi-\psi|)}{\Hop\left[F_\eps G_\zeta *_{\alpha,\phi} f\right](\phi)} \, F_\varepsilon (|\alpha-\beta|) \, q(\eta) \, d \eta \, d \phi \, d \psi \, d\alpha \, d\beta. 
\nonumber
\end{eqnarray}

\bigskip
As done previously in Sec. \ref{sec:weak_consensus} for the spatially homogeneous case, we consider the case of weak consensus interactions where $\gamma$ and $\Sigma^2$ are small and of the same order of magnitude. Taylor expanding the bracket in the previous expression and dropping the low order terms we get
\begin{eqnarray}
&&\hspace{-1cm}
\frac{d}{d t} \int_{\R} f g \, d\alpha \, d \phi = \gamma \int_{\R^{2n+2}} \Big[   \partial_\phi g (\alpha, \phi) \, (\psi-\phi) + \frac{\kappa}{2} \,  \partial^2_\phi g (\alpha, \phi)  \Big] \label{eq:grazing_inhomo}\\
&&\hspace{2cm} 
 f(\alpha, \phi,t) \, f(\beta, \psi,t) \, \frac{G_\zeta (|\phi-\psi|)}{\Hop\left[F_\eps G_\zeta *_{\alpha,\phi} f\right](\phi)} \, F_\varepsilon (|\alpha-\beta|) \, d \phi \, d \psi \, d\alpha \, d\beta, 
\nonumber
\end{eqnarray}
where $\kappa$ is given by Eq. \eqref{eq:def_kappa}.

Using integration by parts, we get the strong form of the Fokker-Planck equation for $f$ in the inhomogeneous case: 
\begin{eqnarray}
&&\hspace{-1cm}
\frac{\partial f}{\partial t}  =  Q(f), \label{eq:kineticFP_inhomo} \\
&&\hspace{-1cm}
Q(f) = \gamma\,\partial_\phi \left\{ \frac{\big( F_\varepsilon \, \phi G_\zeta \big) *_{\alpha, \phi} f }{\Hop\left[F_\eps G_\zeta *_{\alpha,\phi} f\right]} \, f+ \frac{\kappa}{2} \partial_\phi \lp \frac{\big(F_\varepsilon \, G_\zeta\big)*_{\alpha, \phi}f}{\Hop\left[F_\eps G_\zeta *_{\alpha,\phi} f\right]}  \,  f \rp \right\},
\label{eq:FP_inhomo}
\end{eqnarray}
where $ *_{\alpha, \phi}$ denotes a convolution in both the $\alpha$ and $\phi$ variables. This system can also be directly derived from a stochastic differential system in exactly the same manner as in the homogeneous case, remark \ref{rem:SDE}. Details are omitted.

Now, we plan to investigate the $\varepsilon \to 0$ limit, i.e. we assume that the exchange of intentions is at leading order a local phenomenon in space. We note that for any arbitrary function $\varphi$, we have:
$$ F_\varepsilon  * \varphi = \varphi + \varepsilon^2 D \Delta_\alpha \varphi + {\mathcal O}(\varepsilon^4), $$
where $\Delta_\alpha$ denotes the Laplacian operator in the $\alpha$ variable. Using this expansion and introducting a diffusive time variable $t' = \varepsilon^2  t$, we get, dropping the primes: 
 \begin{equation}
\hspace{-1cm}
\frac{\partial f^\varepsilon}{\partial t} + \mbox{R}^\eps   = \frac{1}{\varepsilon^2} Q(f^\varepsilon) \label{eq:FP_scaled_0} 
\end{equation}
where the operator $Q$ is given in \eqref{eq:FP} and
\begin{itemize}
\item[i)] for the symmetric case
\begin{equation}
\mbox{R}^\eps=\mbox{R}^\eps_s=- \gamma D \,\partial_\phi \Big\{ \big( \phi G_\zeta * \Delta_\alpha f^\varepsilon \big) \, f^\varepsilon + \frac{\kappa}{2} \partial_\phi \Big( ( G_\zeta * \Delta_\alpha f^\varepsilon ) \, f^\varepsilon \Big) \Big\};
\end{equation}
\item[ii)] and for the non-symmetric case
\begin{equation}
\mbox{R}^\eps=\mbox{R}^\eps_a=-\gamma D\, \partial_\phi \left\{ \frac{(\phi G_\zeta) *f}{(G_\zeta*f)^2}\, f\, (G_\zeta*\Delta_\alpha f) - \frac{(\phi G_\zeta)* \Delta_\alpha f}{G_\zeta* f}\, f\right\}.
\end{equation}
\end{itemize}	
In both cases $*$ denotes a convolution with respect to the $\phi$ variable only. In the following section, we study the formal limit  as $\varepsilon \to 0$ for the Gaussian  case \eqref{eq:gGauss}.

\subsection{Derivation of the hydrodynamic model (Gaussian  case)}

In this section, we restrict ourselves to the Gaussian  case \eqref{eq:gGauss} and study the evolution of the mean opinion for Eq. \eqref{eq:FP_scaled_0} in the limit $\eps \to 0$. To compute this limit, the concept of collision invariant is key as it will be explained next. However, in the non-symmetric case the mean opinion is not a conserved quantity. This difficulty will be overcome by using a technique reminiscent of the classical Hilbert expansion \cite{Cergignani_illner}. Notice that the lack of conserved quantities is not new in the literature, see Ref. \cite{Degond_etal_PRSA14} where the authors use a novel technique to compute the hydrodynamic limit, different from what we do here.

\medskip

\noindent Firstly, we deduce the following:
\begin{lemma} Assume that $G_\zeta$ is given by \eqref{eq:Gzeta} with $\kappa$ satisfying \eqref{eq:kappa_c} in the symmetric case and \eqref{eq:kappa_c_asym} in the non-symmetric case. Assume, further, that the Gaussian  equilibria given by Prop. \ref{prop:equi_gauss} are the only equilibria of $Q$.
Let $f^\varepsilon$ be a solution of (\ref{eq:FP_scaled_0}) and suppose that $f^\varepsilon \to f$ as nicely as needed. Then, there exist two functions $\rho=\rho(\alpha,t)$ and $\varphi=\varphi(\alpha,t)$ such that 
\begin{equation}
f(\alpha,\phi,t) = \rho(\alpha,t) \, F_{\sigma,\,\varphi(\alpha,t)}(\phi). 
\label{eq:equi_inhomo}
\end{equation}
The functions $\rho=\rho(\alpha,t)$ and $\varphi=\varphi(\alpha,t)$ are respectively the agents' density and intention field at location $\alpha$ and time $t$, i.e. we have 
\begin{equation}
\int_{\R} f(\alpha,\phi,t) \, d\phi= \rho(\alpha,t), \qquad \int_{\R} f(\alpha,\phi,t) \, \phi \, d\phi= \rho(\alpha,t) \varphi(\alpha,t). 
\label{eq:moments}
\end{equation}
\label{lem:equi_inhomo}
\end{lemma}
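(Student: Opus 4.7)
The plan is the standard hydrodynamic strategy: use the $1/\eps^2$ prefactor in \eqref{eq:FP_scaled_0} to force the limit $f$ into the kernel of $Q$ at each $(\alpha,t)$, and then appeal to the classification of equilibria in Proposition \ref{prop:equi_gauss} to recover the ansatz \eqref{eq:equi_inhomo}.

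First, I would multiply \eqref{eq:FP_scaled_0} by $\eps^2$ to obtain
$$\eps^2\, \partial_t f^\eps + \eps^2\, \mbox{R}^\eps = Q(f^\eps).$$
The operators $\mbox{R}^\eps_s$ and $\mbox{R}^\eps_a$ are $\eps$-independent and involve only convolutions in $\phi$ applied to $f^\eps$ and $\Delta_\alpha f^\eps$, so both terms on the left-hand side vanish in the limit $\eps \to 0$ under the assumed convergence. Since $Q$ is continuous on suitable function spaces (its building blocks are convolutions with $G_\zeta$ and $\phi G_\zeta$, plus a pointwise division by $G_\zeta * f$ in the non-symmetric case), one gets $Q(f^\eps) \to Q(f)$. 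Therefore $Q\bigl(f(\alpha,\cdot,t)\bigr) = 0$ at each fixed $(\alpha,t)$, with $Q$ understood as the $\phi$-only operator from \eqref{eq:FP}.

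Next, I would use that $Q$ is positively homogeneous in $f$: in the symmetric case $Q$ is quadratic, so $Q(\lambda f) = \lambda^2 Q(f)$, while in the non-symmetric case the ratio $\frac{(\phi G_\zeta)*f}{G_\zeta*f}$ is invariant under $f \mapsto \lambda f$, making $Q$ one-homogeneous. Consequently, any non-negative multiple of a Gaussian equilibrium $F_{\sigma,\varphi}$ of Proposition \ref{prop:equi_gauss} still lies in the kernel of $Q$. Combined with the hypothesis that these Gaussians exhaust the equilibria of $Q$, for each $(\alpha,t)$ there must exist $\rho(\alpha,t) \ge 0$ and $\varphi(\alpha,t) \in \R$ with
$$f(\alpha,\phi,t) = \rho(\alpha,t)\, F_{\sigma,\varphi(\alpha,t)}(\phi),$$
(with $\sigma = \sigma_s$ or $\sigma_a$ as appropriate), which is precisely \eqref{eq:equi_inhomo}.

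Finally, the identities \eqref{eq:moments} follow by direct integration: $F_{\sigma,\varphi}$ is a probability density on $\R$ with mean $\varphi$, so $\int_\R F_{\sigma,\varphi}\, d\phi = 1$ and $\int_\R \phi\, F_{\sigma,\varphi}\, d\phi = \varphi$. The only step demanding real technical care is the passage $Q(f^\eps) \to Q(f)$: $Q$ is nonlinear and nonlocal in $\phi$, and in the non-symmetric case the denominator $G_\zeta * f^\eps$ must stay bounded below uniformly in $\eps$, while convergence of the numerators calls for a moment bound on $f^\eps$ in $\phi$. All such quantitative points are exactly what the ``as nicely as needed'' hypothesis is meant to absorb, so the formal proof reduces to the three steps above.
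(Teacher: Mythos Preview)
Your proof is correct and follows essentially the same route as the paper's: multiply \eqref{eq:FP_scaled_0} by $\eps^2$, pass to the limit to obtain $Q(f)=0$ pointwise in $(\alpha,t)$, and then invoke the classification of equilibria to deduce \eqref{eq:equi_inhomo} and \eqref{eq:moments}. Your version is in fact more explicit than the paper's, which simply says ``letting $\eps\to 0$ we get $Q(f)=0$'' and cites Proposition~\ref{prop:equi_gauss}; your homogeneity remark makes precise why the proportionality constant $\rho(\alpha,t)$ is allowed, a point the paper leaves implicit.
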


\begin{proof} Letting $\varepsilon \to 0$ in (\ref{eq:FP_scaled_0}), we get that $f$ formally satisfies $Q(f) = 0$. Since $Q$ only operates with respect to the $\phi$ variable we deduce that $f$ is proportional to $F_{\sigma,\varphi}$ by Lemma \ref{prop:equi_gauss} with $\varphi$ possibly depending on $(\alpha,t)$. Furthermore, since $F_{\sigma,\varphi}$ is a probability distribution with respect to $\phi$, the proportionality coefficient is the density $\rho(\alpha,t)$. 
\end{proof}

\begin{remark}[The general case]
Notice that the equilibria defined via Eqs. \eqref{eq:equi} and \eqref{eq:equi_asym} (if they exist) are not necessarily defined by some moments of $f$ like the total mass, mean value or variance. This is the case in classical kinetic theory where the goal in the hydrodynamic limit is to find the evolution of these moments, which correspond also to conserved quantities of the system. However, here, in the general case, it is not clear that this is the case. A partial attempt to solve this problem can be found in \cite{consensusexsitence}.
\end{remark}

\medskip
The function \eqref{eq:equi_inhomo} is a local thermodynamical equilibrium of the system. Now, we need to find the collision invariants of $Q$, namely the functions $\chi(\phi)$ such that 
$$ \int_{\R} Q(f) (\phi) \, \chi(\phi) \, d\phi = 0, \qquad \forall f. $$
We have the

\begin{lemma}[Collision invariants]
The function $\chi(\phi) = 1$ is a collision invariant of the symmetric and non-symmetric cases. The function  $\chi(\phi) = \phi$ is a collision invariant of the symmetric case.
\label{lem:IC}
\end{lemma}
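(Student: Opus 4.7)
The strategy is to handle the two invariants separately, exploiting the divergence structure of $Q$ in \eqref{eq:FP} together with the evenness of $G_\zeta$.

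For $\chi(\phi) = 1$, the argument is immediate in both cases: the right-hand side of \eqref{eq:FP} is $\gamma\,\partial_\phi\{\cdots\}$, where the bracketed expression decays at $\phi = \pm\infty$ for any admissible $f$ (since $f$ and the convolutions $G_\zeta * f$, $(\phi G_\zeta) * f$ all decay). Integrating against $\chi = 1$ in $\phi$ therefore produces a vanishing boundary term, so $\int_\R Q(f)\, d\phi = 0$ for every $f$, regardless of whether $\Hop \equiv 1$ or $\Hop = \mbox{Id}$.

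For $\chi(\phi) = \phi$ in the symmetric case ($\Hop \equiv 1$), the plan is to integrate by parts once. This yields
\begin{equation*}
\int_\R Q(f)\, \phi\, d\phi \;=\; -\gamma \int_\R \Big[ \big((\phi G_\zeta) * f\big)(\phi)\, f(\phi) \;+\; \tfrac{\kappa}{2}\, \partial_\phi\big( (G_\zeta * f)(\phi)\, f(\phi) \big) \Big]\, d\phi.
\end{equation*}
The second term on the right is itself a total derivative in $\phi$ and vanishes after integration. For the first term I would unfold the convolution to obtain
\begin{equation*}
-\gamma \int_{\R^2} (\phi - \psi)\, G_\zeta(\phi - \psi)\, f(\phi)\, f(\psi)\, d\phi\, d\psi,
\end{equation*}
and then exploit the fact that $G_\zeta$ is even in its argument, so $u \mapsto u\, G_\zeta(u)$ is odd. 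Swapping the dummy variables $\phi \leftrightarrow \psi$ flips the sign of $(\phi-\psi)G_\zeta(\phi-\psi)$ while leaving $f(\phi)f(\psi)\, d\phi\, d\psi$ untouched, so the integral equals its own negative and must vanish.

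I do not expect any serious obstacle here: each case reduces to a single integration by parts together with an elementary antisymmetry argument, and no delicate estimates or compactness input is required. The only point worth noting — and the reason the lemma deliberately excludes $\phi$ from the non-symmetric list — is that with $\Hop = \mbox{Id}$ the drift carries an extra factor $1/(G_\zeta * f)(\phi)$, which is not a function of $\phi-\psi$ alone and thereby destroys the $\phi \leftrightarrow \psi$ symmetry that rescues the symmetric computation; so the same route provably fails in the non-symmetric case, consistent with the statement of the lemma.
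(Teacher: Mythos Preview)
Your proof is correct and follows essentially the same route as the paper: integrate by parts using the divergence form of $Q$ for $\chi=1$, and for $\chi=\phi$ in the symmetric case integrate by parts once and kill the resulting double integral by the $\phi\leftrightarrow\psi$ antisymmetry of $(\phi-\psi)G_\zeta(\phi-\psi)$ against the symmetric weight $f(\phi)f(\psi)$. Your additional remark explaining why the $1/(G_\zeta*f)$ factor spoils this symmetry in the non-symmetric case is a nice clarification not spelled out in the paper.
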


\begin{proof} Since $Q$ is a derivative with respect to $\phi$, by integration by parts, it is straightforward that $\chi(\phi) = 1$ is a collision invariant for both cases. Now, for the symmetric case, considering 
$\chi(\phi) = \phi$, we have, using integration by parts: 
\begin{eqnarray*}
&&\hspace{-1cm}
\int_{\mathbb R} Q(f)(\phi) \, \phi \,  d \phi = - \gamma \int_{{\mathbb R}^2} (\phi-\psi) \,  G_\zeta (\phi-\psi) \,  f(\phi) \, f(\psi) \, d \phi \, d \psi = 0, 
\end{eqnarray*}
because the function $(\phi,\psi) \to G_\zeta (\psi-\phi) \,  f(\phi) \, f(\psi)$ is invariant by exchange of $\phi$ and $\psi$ while the function $(\phi,\psi) \to (\psi-\phi)$ is changed in its opposite.  
\end{proof}

That $\chi(\phi) = 1$ and $\chi(\phi) = \phi$ are collision invariants corresponds to the conservation of the number of agents and that of the mean intention during an encounter, respectively.

\begin{lemma}[Collision invariant for the linearised operator]
\label{lem:invariant_linearised} Assume that the solution to \eqref{eq:kineticFP_inhomo} decays sufficiently fast as $|\phi|\to \infty$.
Then, under the assumptions of Lemma \ref{lem:equi_inhomo}, in the non-symmetric case, the primitive $\chi(\phi)=\int^{\phi} (G_\zeta*F_{\sigma,\varphi(\alpha,t)})(\phi')d\phi'$ is a collision invariant of the linearized collision operator, denoted $Lin_Q$, around $F_{\sigma,\varphi(\alpha,t)}$.
\end{lemma}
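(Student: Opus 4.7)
The approach is to linearize the non-symmetric collision operator around $F := F_{\sigma_a,\varphi(\alpha,t)}$ and check that pairing against $\chi$ kills every admissible perturbation $g$. Write $Q$ in the non-symmetric case as $Q(f) = \gamma\,\partial_\phi A(f)$ with
\[
A(f) = \frac{(\phi G_\zeta)*f}{G_\zeta*f}\,f + \frac{\kappa}{2}\,\partial_\phi f,
\]
substitute $f = F + \varepsilon g$ and expand the quotient to first order in $\varepsilon$ to obtain
\[
dA_F \cdot g = \frac{(\phi G_\zeta)*F}{G_\zeta*F}\,g + \frac{(\phi G_\zeta)*g}{G_\zeta*F}\,F - \frac{((\phi G_\zeta)*F)(G_\zeta*g)}{(G_\zeta*F)^2}\,F + \frac{\kappa}{2}\,\partial_\phi g,
\]
so that $Lin_Q(g) = \gamma\,\partial_\phi(dA_F\cdot g)$. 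Since $\chi'=G_\zeta*F$, one integration by parts (whose boundary terms drop by the decay hypothesis) yields
\[
\int_{\R} Lin_Q(g)\,\chi\,d\phi = -\gamma\int_{\R}(dA_F\cdot g)\,(G_\zeta*F)\,d\phi,
\]
and the whole task reduces to showing that this integral vanishes for every admissible $g$.

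The crucial input is the equilibrium identity. In the non-symmetric case $F=M_F$ by Proposition~\ref{prop:equi_gene}(ii), and combining this with \eqref{eq:potential}--\eqref{eq:gibbs} yields $\partial_\phi V_F = -\tfrac{\kappa}{2}\,\partial_\phi\log F$, that is,
\[
\frac{(\phi G_\zeta)*F}{G_\zeta*F} = -\frac{\kappa}{2}\,\frac{\partial_\phi F}{F}.
\]
Substituting this into $(dA_F\cdot g)(G_\zeta*F)$ produces four terms
\[
T_1 = -\tfrac{\kappa}{2}\tfrac{\partial_\phi F}{F}\,g\,(G_\zeta*F),\ \ T_2 = ((\phi G_\zeta)*g)\,F,\ \ T_3 = \tfrac{\kappa}{2}\,\partial_\phi F\,(G_\zeta*g),\ \ T_4 = \tfrac{\kappa}{2}\,(G_\zeta*F)\,\partial_\phi g,
\]
which I will cancel pairwise. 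Because $G_\zeta$ is even and $\phi G_\zeta$ is odd, the substitution $\phi'\mapsto-\phi'$ inside the convolution produces $\int((\phi G_\zeta)*g)\,F\,d\phi = -\int g\,((\phi G_\zeta)*F)\,d\phi$; applying the equilibrium identity once more converts this into $\int T_2\,d\phi = -\int T_1\,d\phi$. Similarly, self-adjointness of convolution with the even function $G_\zeta$ gives $\int \partial_\phi F\,(G_\zeta*g)\,d\phi = \int (G_\zeta*\partial_\phi F)\,g\,d\phi$, and an integration by parts in $T_4$ together with $\partial_\phi(G_\zeta*F)=G_\zeta*\partial_\phi F$ delivers $\int T_4\,d\phi = -\int T_3\,d\phi$. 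The four contributions sum to zero, which is the claim.

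The main obstacle is bookkeeping rather than any deep estimate: the non-linearity of $Q$ forces the Fréchet derivative to have four summands, and the cancellations only materialize because the equilibrium identity is used in exactly the right places and the parity of $G_\zeta$ is exploited twice. The decay hypothesis enters twice as well, first to guarantee $A(F)\equiv 0$ (and not merely a constant) and then to discard boundary terms in the integrations by parts; any weakening of it would force the argument to proceed through truncations.
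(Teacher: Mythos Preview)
Your argument is correct and structurally identical to the paper's: linearise, integrate by parts once using $\chi'=G_\zeta*F$, and cancel the four resulting terms pairwise via the parity of $G_\zeta$ and self-adjointness of convolution. The one genuine difference is how the last cancellation is obtained. The paper handles the term corresponding to your $T_3$ by an explicit Gaussian computation, expanding $G_\zeta*\bigl(((\phi G_\zeta)*F)F/(G_\zeta*F)\bigr)$ through the formulas \eqref{eq:product_Gaussian s}--\eqref{eq:Gzeta} and invoking the specific value \eqref{eq:sigma_asym} of $\sigma_a$ at the very end. You instead apply the equilibrium identity $(\phi G_\zeta)*F/(G_\zeta*F)=-\tfrac{\kappa}{2}\,\partial_\phi\log F$ (equivalent to $F=M_F$) up front, which turns that convolution into $-\tfrac{\kappa}{2}\,G_\zeta*\partial_\phi F$ without any Gaussian algebra. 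This buys you a cleaner and slightly more general proof: nothing in your argument uses the Gaussian shape of $G_\zeta$ or $F$ beyond the evenness of $G_\zeta$ and the abstract fixed-point relation $F=M_F$, whereas the paper's computation is tied to the Gaussian case.
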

\begin{proof}
The linearized form of $Q$ in Eq. \eqref{eq:Q_asym}  around $F_{\sigma,\varphi(\alpha,t)}$ is given by:
\begin{eqnarray} \label{eq:LinQ}
Lin_{Q}(f)&=&\gamma\,\partial_{\phi}\Bigg( \frac{(\phi G_\zeta*f)F_{\sigma, \varphi(\alpha,t)}}{G_\zeta*F_{\sigma, \varphi(\alpha,t)}}+\frac{(\phi G_\zeta*F_{\sigma,\varphi(\alpha,t)})f}{G_\zeta*F_{\sigma, \varphi(\alpha,t)}}\\
&&\hspace{1.3cm}- \frac{(\phi G_\zeta*F_{\sigma,\varphi(\alpha,t)})(G_\zeta*f)F_{\sigma, \varphi(\alpha,t)}}{(G_\zeta*F_{\sigma, \varphi(\alpha,t)})^{2}} +\frac{\kappa}{2} \partial_{\phi}f\Bigg)
\end{eqnarray}

By assumption, the boundary terms vanish. 
Integrating against $\chi$ and using that $\chi$ is the primitive of $G_\zeta*F_{\sigma,\varphi(\alpha,t)}$, we obtain:
\begin{eqnarray*}
&&\hspace{-1cm}\int_{\R} Lin_{Q}(f)(\phi)\left[\int^{\phi} (G_\zeta*F_{\sigma,\varphi(\alpha,t)})(\phi')d\phi' \right]d\phi  \\
&&=-\gamma\int_{\R} \Bigg( \frac{(\phi G_\zeta*f)F_{\sigma, \varphi(\alpha,t)}}{G_\zeta*F_{\sigma, \varphi(\alpha,t))}}+\frac{(\phi G_\zeta*F_{\sigma,\varphi(\alpha,t)})f}{G_\zeta*F_{\sigma, \varphi(\alpha,t))}}\\
&&\hspace{3cm}- \frac{(\phi G_\zeta*F_{\sigma,\varphi(\alpha,t)})(G_\zeta*f)F_{\sigma, \varphi(\alpha,t))}}{(G_\zeta*F_{\sigma,\varphi(\alpha,t)})^{2}} + \frac{\kappa}{2} \partial_{\phi}f\Bigg)(\phi)  (G_\zeta*F_{\sigma,\varphi(\alpha,t)})(\phi) d\phi\\
&&\textit{(Here we use Fubini's theorem to change the order of integration...)} \\
&&=-\gamma\int_{\R} f \Bigg[ -(\phi G_\zeta*F_{\sigma,\varphi(\alpha,t)})(\phi)+(\phi G_\zeta*F_{\sigma,\varphi(\alpha,t)})(\phi)\\
&&\hspace{3cm}-G_\zeta*\left(\frac{(\phi G_\zeta*F_{\sigma,\varphi(\alpha,t)}) F_{\sigma, \varphi(\alpha,t)}}{(G_\zeta*F_{\sigma,\varphi(\alpha,t)})}\right) - \frac{\kappa}{2} \partial_{\phi}(G_\zeta*F_{\sigma,\varphi(\alpha,t)})(\phi)  \Bigg]d\phi\\
&&=-\gamma\int_{\R} f \left[ -G_\zeta*\left(\frac{(\phi G_\zeta*F_{\sigma,\varphi(\alpha,t)}) F_{\sigma, \varphi(\alpha,t))}}{(G_\zeta*F_{\sigma,\varphi(\alpha,t)})}\right) - \frac{\kappa}{2} \partial_{\phi}(G_\zeta*F_{\sigma,\varphi(\alpha,t)})(\phi)  \right]d\phi\\
&&=0,
\end{eqnarray*}
where we have integrated by parts. Using the exponential decay of $F$ at infinity, the boundary terms in the integration by parts vanish. The last equality is obtained as follows, using Eqs. \eqref{eq:convolution_Gaussian s} and \eqref{eq:Gzeta}:
\begin{eqnarray*}
G_\zeta*\left(\frac{(\phi G_\zeta*F_{\sigma,\varphi(\alpha,t)}) F_{\sigma, \varphi(\alpha,t)}}{(G_\zeta*F_{\sigma,\varphi(\alpha,t)})}\right)(\phi)&=&-\zeta^{2}\Bigg{(}G_\zeta*\left(\frac{\left(\partial_{\phi}\left(G_\zeta*F_{\sigma,\varphi(\alpha,t)}\right)\right) F_{\sigma, \varphi(\alpha,t)}}{(G_\zeta*F_{\sigma,\varphi(\alpha,t)})}\right)\Bigg{)}(\phi)\\
&=&-\zeta^{2}\Big( G_\zeta*\Big[\left(\partial_{\phi}\ln\left( G_\zeta*F_{\sigma,\varphi(\alpha,t)}\right)\right) F_{\sigma,\varphi(\alpha,t)}\Big]\Big)(\phi)\\ 
&=&\zeta^{2}\frac{1}{\sigma^2+\zeta^2}\Big{(}G_\zeta*((\phi-\varphi(\alpha,t)) F_{\sigma,\varphi(\alpha,t)}\Big{)}(\phi)\\
&=&-\zeta^{2}\frac{\sigma^2}{\sigma^2+\zeta^2}\Big{(}G_\zeta* \partial_\phi F_{\sigma,\varphi(\alpha,t)}\Big{)}(\phi)\\
&=&-\frac{\kappa}{2} \partial_{\phi}\Big{(}G_\zeta* F_{\sigma,\varphi(\alpha,t)}\Big{)}(\phi).
\end{eqnarray*}
In the last equality we used the value of $\sigma$ given in \eqref{eq:sigma_asym}.
\end{proof}

\medskip
Finally, we can proof the
\begin{theorem} Under the assumptions of Lemma \ref{lem:equi_inhomo},
as $\varepsilon \to 0$, the solution $f^\varepsilon$ of \eqref{eq:FP_scaled_0} formally converges to a local thermodynamical equilibrium \eqref{eq:equi_inhomo}, i.e., $f^\varepsilon(\alpha,\varphi,t) = \rho(\alpha, t) F_{\sigma, \varphi(\alpha,t)}+\mathcal{O}(\eps^2)$ in a suitable topology, where $\rho$ is constant in time and the density of opinion $\rho\varphi$ evolves according to the following diffusion equation in regions where $\rho>0$:
\begin{itemize}
\item[i)] in the  symmetric case
\begin{equation}
\frac{\partial}{\partial t}(\rho\varphi) = C_s \, \nabla_\alpha \cdot \lp \rho^2 \nabla_\alpha \varphi\rp, \quad C_s:=\gamma D \frac{\zeta^3}{(2\sigma_s^2 + \zeta^2)^{3/2}} 
\label{eq:varphi_diffusion_sym}
\end{equation}
where $\sigma$ is given in Eq. \eqref{eq:sigma};
\item[ii)] and in the non-symmetric case
\begin{equation}
\frac{\partial}{\partial t} (\rho\varphi) = \frac{C_a }{\rho}\, \nabla_\alpha \cdot \lp \rho^2 \nabla_\alpha \varphi\rp, \quad C_a:=\gamma D \frac{\zeta^2}{\sigma_a^2+\zeta^2},
\label{eq:varphi_diffusion_asym}
\end{equation}
where $\sigma$ is given by Eq. \eqref{eq:sigma_asym}. 
\end{itemize}
\label{thm:diffusion_approx}
\end{theorem}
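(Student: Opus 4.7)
My plan is to work with a Hilbert-type expansion $f^\varepsilon = f^{(0)} + \varepsilon^2 f^{(1)} + O(\varepsilon^4)$, where $f^{(0)}$ is the local equilibrium provided by Lemma \ref{lem:equi_inhomo}, i.e.\ $f^{(0)}(\alpha,\phi,t) = \rho(\alpha,t)\,F_{\sigma,\varphi(\alpha,t)}(\phi)$. Inserting this ansatz into \eqref{eq:FP_scaled_0} and matching powers of $\varepsilon$, the $O(\varepsilon^{-2})$ balance $Q(f^{(0)})=0$ is automatic, and the $O(1)$ balance reads $\partial_t f^{(0)} + R^\varepsilon(f^{(0)}) = Lin_Q(f^{(1)})$, with $Lin_Q$ the linearisation of $Q$ around $f^{(0)}$. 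I would then extract macroscopic equations for $\rho$ and $\varphi$ by integrating this identity in $\phi$ against collision invariants $\chi(\phi)$, in the sense of Lemmas \ref{lem:IC} and \ref{lem:invariant_linearised}. Testing against $\chi=1$, which is a CI of $Q$ (hence of $Lin_Q$) in both cases, and noting that $R^\varepsilon$ has divergence form in $\phi$, I obtain $\partial_t \rho = 0$, establishing the claimed time-independence of $\rho$.

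In the symmetric case, Lemma \ref{lem:IC} also gives $\chi=\phi$ as a CI, so integrating the $O(1)$ balance against $\phi$ reduces to $\partial_t(\rho\varphi) + \int_\R \phi R^\varepsilon_s(f^{(0)}) d\phi = 0$. One integration by parts in $\phi$ rewrites the flux as $\gamma D \int_\R (\phi G_\zeta * \Delta_\alpha f^{(0)})(\phi)\, f^{(0)}(\phi)\, d\phi$. Using $\phi G_\zeta = -\zeta^2\,\partial_\phi G_\zeta$ together with \eqref{eq:Gzeta} and \eqref{eq:convolution_Gaussian s}, the convolution $G_\zeta * f^{(0)}$ becomes a rescaled Gaussian with mean $\varphi$, variance $\zeta^2 + \sigma_s^2$ and amplitude proportional to $\rho$. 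Expanding $\Delta_\alpha f^{(0)}$ via $\nabla_\alpha F_{\sigma_s,\varphi} = -\nabla_\alpha\varphi\,\partial_\phi F_{\sigma_s,\varphi}$ and then evaluating the Gaussian product integrals through \eqref{eq:product_Gaussian s} and \eqref{eq:convolution_derivative}, the various terms collapse to $-C_s \nabla_\alpha \cdot (\rho^2 \nabla_\alpha \varphi)$ with $C_s$ as stated, giving \eqref{eq:varphi_diffusion_sym}.

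In the non-symmetric case, $\phi$ is not a CI of $Q$, so I would instead use the generalized invariant $\chi(\phi) = \int^{\phi} (G_\zeta * F_{\sigma_a,\varphi(\alpha,t)})(\phi')\,d\phi'$ from Lemma \ref{lem:invariant_linearised}. Since $\chi$ annihilates $Lin_Q(f^{(1)})$, testing the $O(1)$ balance against $\chi$ yields $\int_\R \chi \partial_t f^{(0)}\, d\phi + \int_\R \chi R^\varepsilon_a(f^{(0)})\, d\phi = 0$. The time-derivative contribution is obtained from $\partial_t f^{(0)} = -\rho (\partial_t \varphi)\,\partial_\phi F_{\sigma_a,\varphi}$ (using $\partial_t\rho=0$) and one integration by parts, which reduces it to $\rho\,\partial_t \varphi \int (G_\zeta * F_{\sigma_a,\varphi})\, F_{\sigma_a,\varphi}\, d\phi$, evaluated explicitly through \eqref{eq:product_Gaussian s}, \eqref{eq:convolution_Gaussian s} and \eqref{eq:Gzeta}. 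The flux is handled analogously, each convolution reducing to a Gaussian with a shifted variance, and the specific value of $\sigma_a^2$ in \eqref{eq:sigma_asym} producing the crucial cancellations in the same spirit as the final display of the proof of Lemma \ref{lem:invariant_linearised}. The main obstacle I anticipate lies precisely in this last step: unlike $R^\varepsilon_s$, the operator $R^\varepsilon_a$ is not in divergence form and features quotients $(\phi G_\zeta * f)/(G_\zeta * f)^2$ and $(\phi G_\zeta * \Delta_\alpha f)/(G_\zeta * f)$. Verifying that, after substitution of $f^{(0)}$, the resulting ratios of Gaussians combine into the single divergence $\nabla_\alpha \cdot (\rho^2 \nabla_\alpha \varphi)$, with the prefactor $1/\rho$ emerging correctly and reflecting the non-conservation of $\rho\varphi$, requires careful algebraic bookkeeping; all remaining steps are formal manipulations standard in hydrodynamic limits.
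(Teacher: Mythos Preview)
Your proposal is correct and follows essentially the same route as the paper: Hilbert expansion, testing against the collision invariants $1$ and $\phi$ (symmetric case) or the generalized invariant $\chi$ of Lemma \ref{lem:invariant_linearised} (non-symmetric case), and reducing all terms to Gaussian integrals via \eqref{eq:product_Gaussian s}--\eqref{eq:Gzeta}. The only cosmetic difference is that in the symmetric case the paper integrates \eqref{eq:FP_scaled_0} directly against $1$ and $\phi$ (since these annihilate the full $Q$, not merely $Lin_Q$) before passing to the limit, whereas you route everything through the $O(1)$ balance; also note that $R^\varepsilon_a$ \emph{is} a $\phi$-derivative, so one integration by parts against $\chi$ works exactly as in the symmetric case and the quotients you worry about are the ones the paper handles in its computation of $I_3$.
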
 

For the significance of this theorem we refer the reader to Sec. \ref{sec:interpretation_results}.

%
%

\begin{proof}[Proof of Theorem \ref{thm:diffusion_approx}]  From Lemma \ref{lem:equi_inhomo}, we deduce that $f=f(\alpha,\phi,t)=\rho(\alpha,t) F_{\sigma,\varphi(\alpha,t)(\varphi)}$. Notice that by assumption $G_\zeta$ is of the form \eqref{eq:Gzeta}.
We split the proof between the symmetric case and the non-symmetric case.\\
\paragraph{Symmetric case.} 
 Successively multiplying Eq. \eqref{eq:FP_scaled_0} by $1$ and $\varphi$ and using Lemma \ref{lem:IC} we get:
\begin{eqnarray}
&&\hspace{-1cm}
\frac{\partial \rho^\varepsilon}{\partial t} (\alpha,t) =  0, \label{eq:rho_eps} \\
&&\hspace{-1cm}
\frac{\partial}{\partial t}  (\rho^\varepsilon \varphi^\varepsilon) (\alpha,t) + \gamma\, D \int_{{\mathbb R}^2} (\phi-\psi) \,  G_\zeta (\phi-\psi) \,  \Delta_\alpha f^\varepsilon(\alpha, \psi,t ) \, f^\varepsilon(\alpha, \phi,t) \, d \phi \, d \psi = 0, 
\label{eq:phi_eps}
\end{eqnarray}
where $\rho^\varepsilon$ and $\varphi^\varepsilon$ are defined by (\ref{eq:moments}) with $f$ replaced by $f^\varepsilon$. Now, letting $\varepsilon \to 0$ and using (\ref{eq:equi_inhomo}), we get:
\begin{eqnarray}
&&\hspace{-1cm}
\frac{\partial \rho}{\partial t} (\alpha,t) =  0, \label{eq:rho} \\
&&\hspace{-1cm}
\frac{\partial}{\partial t}  (\rho \varphi) (\alpha,t) +\gamma\, D \int_{{\mathbb R}^2} (\phi-\psi) \,  G_\zeta (\phi-\psi) \,  \Delta_\alpha \big(\rho(\alpha,t) \, F_{\sigma,\,\varphi(\alpha,t)}(\psi)\big) \nonumber \\
&&\hspace{7cm}
\rho(\alpha,t) \, F_{\sigma,\,\varphi(\alpha,t)}(\phi) \, d \phi \, d \psi = 0, 
\label{eq:phi}
\end{eqnarray}
We compute:
\begin{eqnarray*}
&&\hspace{-1cm}
\nabla_\alpha \big(\rho(\alpha,t) \, F_{\sigma,\,\varphi(\alpha,t)}(\psi)\big)  = F_{\sigma,\,\varphi(\alpha,t)}(\psi) \left[ \nabla_\alpha \rho + \Big( \frac{\psi - \varphi}{\sigma^2}\Big) \rho \nabla_\alpha \varphi \right](\alpha,t) , 
\end{eqnarray*}
and 
\begin{eqnarray}
&&\hspace{-1cm}
\Delta_\alpha (\rho(\alpha,t) \, F_{\sigma,\,\varphi(\alpha,t)}(\psi)) \label{eq:laplacian_f}\\
&&\hspace{-0.5cm}
=  F_{\sigma,\,\varphi(\alpha,t)}(\psi) \Bigg[ \Delta_\alpha \rho - \frac{1}{\sigma^2} \rho |\nabla_\alpha \varphi|^2 \nonumber \\ 
&&\hspace{3cm}
+ \Big( \frac{\psi - \varphi}{\sigma^2}\Big) \big(\rho \Delta_\alpha \varphi  +2\nabla_\alpha \rho \cdot \nabla_\alpha \varphi  \big) + \Big( \frac{\psi - \varphi}{\sigma^2}\Big)^2 \rho |\nabla_\alpha \varphi|^2 \Bigg](\alpha,t). \nonumber 
\end{eqnarray}
Then, we have: 
\begin{eqnarray*}
&&\hspace{-1cm}
- \frac{1}{\gamma D} \frac{\partial}{\partial t}  (\rho \varphi) (\alpha,t) = \\
&&\hspace{-0.5cm}
\rho\,\Big[ \Delta_\alpha \rho - \frac{1}{\sigma^2} \rho |\nabla_\alpha \varphi|^2 \Big](\alpha,t) 
\int_{{\mathbb R}^2} (\phi-\psi) \,  G_\zeta (\phi-\psi) \,  F_{\sigma,\,\varphi(\alpha,t)}(\psi) \, F_{\sigma,\,\varphi(\alpha,t)}(\phi) \, d \phi \, d \psi \\
&&\hspace{-0.5cm}
+\rho\, \left[\rho \Delta_\alpha \varphi + 2\nabla_\alpha \rho \cdot \nabla_\alpha \varphi  \right](\alpha,t) 
\int_{{\mathbb R}^2} \frac{\psi-\varphi}{\sigma^2} \, (\phi-\psi) \,  G_\zeta (\phi-\psi) \,  F_{\sigma,\,\varphi(\alpha,t)}(\psi) \, F_{\sigma,\,\varphi(\alpha,t)}(\phi) \, d \phi \, d \psi \\
&&\hspace{-0.5cm}
+ \rho^2\, |\nabla_\alpha \varphi|^2(\alpha,t)  
\int_{{\mathbb R}^2} \lp\frac{\psi-\varphi}{\sigma^2}\rp^2 \, (\phi-\psi) \,  G_\zeta (\phi-\psi) \,  F_{\sigma,\,\varphi(\alpha,t)}(\psi) \, F_{\sigma,\,\varphi(\alpha,t)}(\phi) \, d \phi \, d \psi \, .
\end{eqnarray*}
The first integral is identically zero by antisymmetry. The second and third integrals are denoted by $I_2$ and $I_3$. To compute them, we introduce the change of variables $\bar \phi = \phi - \varphi(\alpha,t)$, $\bar \psi = \psi -  \varphi(\alpha,t)$ and we get (omitting  the bars for simplicity):
\begin{eqnarray*}
I_2 &=&  \int_{{\mathbb R}^2} \frac{\psi}{\sigma^2} \, (\phi-\psi) \,  G_\zeta (\phi-\psi) \,  F_{\sigma,\,0}(\psi) \, F_{\sigma,\,0}(\phi) \, d \phi \, d\psi, \\
I_3 &=&  \int_{{\mathbb R}^2}  \lp \frac{\psi}{\sigma^2}\rp^2 ( \phi-\psi) \,  G_\zeta (\phi-\psi) \,  F_{\sigma,\,0}(\psi) \, F_{\sigma,\,0}(\phi) \, d \phi \, d \psi=0.
\end{eqnarray*}
The integral $I_3$ is identically 0 by antisymmetry.
Now, we compute the integral $I_2$ using consecutively Eqs. \eqref{eq:Gzeta}, \eqref{eq:convolution_derivative}, \eqref{eq:convolution_Gaussian s} and \eqref{eq:product_Gaussian s}:
\begin{eqnarray*}
I_2 &=&\int_{{\mathbb R}^2}    \zeta^2\partial_\phi G_\zeta (\phi-\psi) \,  \partial_\psi F_{\sigma,\,0}(\psi) \, F_{\sigma,\,0}(\phi) \, d \phi \, d\psi\\
&=& \zeta^2 \int_{\R} \lp \partial_\phi G_\zeta * \partial_\phi F_{\sigma, 0}\rp(\phi) \, F_{\sigma, 0}(\phi)\, d\phi\\
&=& \zeta^2 \int_{\R} \partial^2_\phi\lp G_\zeta * F_{\sigma, 0}\rp(\phi) \, F_{\sigma, 0}(\phi)\, d\phi\\
&=& -\zeta^2 \int_{\R}\partial_\phi \lp G_\zeta * F_{\sigma, 0} \rp(\phi) \, \partial_\phi F_{\sigma,0}(\phi)\, d\phi\\
&=&-\zeta^2 \int_{\R} \sqrt{2\pi}\zeta \, \partial_\phi \lp F_{\sqrt{\sigma^2+\zeta^2},0} \rp(\phi)\, \partial_\phi F_{\sigma,0}(\phi)\, d\phi\\
&=& -\zeta^3 \sqrt{2\pi}\int_{\R}\frac{\phi^2}{\sigma^2(\sigma^2+\zeta^2)\sqrt{2\pi (2\sigma^2+\zeta^2)}}\,\, F_{\!\!\sqrt{\frac{\sigma^2(\sigma^2+\zeta^2)}{2\sigma^2+\zeta^2}},0}(\phi)\, d\phi\\
&=&- \frac{\zeta^3}{\lp 2\sigma^2+\zeta^2 \rp^{3/2}} \,.
\end{eqnarray*}

Finally:
\begin{eqnarray*}
&&\hspace{-1cm}
\frac{1}{\gamma D} \frac{\partial}{\partial t}  (\rho \varphi) (\alpha,t) = 
\frac{\zeta^3}{(2 \sigma^2 + \zeta^2)^{3/2}}  \rho\Big[\rho \Delta_\alpha \varphi + 2\nabla_\alpha \rho \cdot \nabla_\alpha \varphi \Big](\alpha,t) ,
\end{eqnarray*} 
which gives the result for the symmetric case.

\medskip 
\paragraph{Non-symmetric case.} 
We conclude that $\rho=\rho(\alpha)$ is independent of time analogously as for the symmetric case. To compute the equation for $\varphi=\varphi(\alpha,t)$ we will use the collision invariant for the linearised operator given in Lemma \ref{lem:invariant_linearised}. We multiply equation \eqref{eq:FP_scaled_0} by 
$$\chi = \chi(\phi)= \int^\phi \lp G_\zeta * F_{\sigma, \varphi(\alpha,t)}\rp(\phi')\, d\phi'$$
and obtain
\begin{eqnarray}
&&\hspace{-1cm}\int_{\R} \chi(\phi) \left[\frac{\partial f_{\varepsilon}}{\partial t } - \gamma D\partial_{\phi} \left\{ f_{\varepsilon}\lp \frac{\phi G_\zeta*\Delta_{\alpha} f_{\varepsilon}}{G_\zeta*f_\eps} - \frac{(\phi G_\zeta*f_\varepsilon)(G_\zeta*\Delta_{\alpha} f_{\varepsilon})}{(G_\zeta*f_{\varepsilon})^{2}} \rp\right\}\right](\phi) d\phi   \label{eq:limit_asym}\\
&&\hspace{3cm}= \frac{1}{\varepsilon^{2}}\int_\R Q(f_{\varepsilon})(\phi) \chi (\phi) d\phi .\nonumber
\end{eqnarray}
Now writing that $f_{\varepsilon} = \rho(\alpha,t)F_{
\sigma,\varphi(\alpha,t)}+\varepsilon^{2}f_{1} + o(\varepsilon^{2})$ (by assumption), we have that $ Q(f_{\varepsilon}) =  \varepsilon^{2} Lin_{Q}(f_{1}) + o(\varepsilon^{2})$, where $Lin_{Q}$ is given in \eqref{eq:LinQ}. Consequently, by Lemma \ref{lem:invariant_linearised}, it holds that
$$ \frac{1}{\varepsilon^{2}}\int Q(f_{\varepsilon})(\phi) \chi (\phi) d\phi \underset{\varepsilon \to +0}{\longrightarrow}0.$$

We are left with computing the left hand side of Eq. \eqref{eq:limit_asym}. Letting $\varepsilon \rightarrow 0$, keeping the highest order in $\varepsilon$ and integrating by parts, we obtain
\begin{eqnarray*}
0&=& \int_\R \chi(\phi)\frac{\partial f}{\partial t } \, d\phi+ \gamma D\int_\R (G_\zeta*F_{\sigma,\varphi(\alpha,t)})\,f\,\left[ \frac{\phi G_\zeta*\Delta_{\alpha} f}{G_\zeta*f} - \frac{(\phi G_\zeta*f)(G_\zeta*\Delta_{\alpha} f)}{(G_\zeta*f)^{2}} \right](\phi) \,d\phi \\
&:=&  I_{1}+I_{2}+I_{3} 
\end{eqnarray*}
 with   $f=\rho (\alpha)F_{\sigma,\varphi(\alpha,t)}$. Next we compute each one of the terms. For $I_1$ we obtain, using again Eqs. \eqref{eq:product_Gaussian s}, \eqref{eq:convolution_Gaussian s}, \eqref{eq:convolution_derivative}, \eqref{eq:Gzeta}
\begin{align*}
I_{1}&=  \int_\R \frac{\partial f}{\partial t}(\phi) \chi(\phi)\, d\phi\\
&= \rho(\alpha)\int_\R \frac{\partial F_{\sigma,\varphi(\alpha,t)}}{\partial t}(\phi)\, \chi(\phi)d\phi \\
&= \frac{\partial \varphi}{\partial t}(\alpha,t)\rho(\alpha)\int_{\R} \partial_\phi F_{\sigma,\varphi(\alpha,t)}(\phi)\, \chi(\phi)d\phi \\
&=\frac{\partial \varphi}{\partial t}(\alpha,t)\rho(\alpha)\int_{\R} F_{\sigma,\varphi(\alpha,t)}(\phi) \,( G_\zeta* F_{\sigma,\varphi(\alpha,t)})(\phi) d\phi  \\ 
&=\sqrt{2\pi}\zeta \frac{\partial \varphi}{\partial t}(\alpha,t)\rho(\alpha)\int_{\R} F_{\sigma,\varphi(\alpha,t)} (\phi) \,F_{\sqrt{\sigma^{2}+\zeta^{2}}, \varphi(\alpha,t)}(\phi) d\phi\\
&=\sqrt{2\pi}\zeta \frac{\partial \varphi}{\partial t}(\alpha,t)\rho(\alpha) \frac{1}{\sqrt{2\pi (2\sigma^{2}+\zeta^{2})}}    \int_{\R} F_{\sqrt{\frac{\sigma^{2}(\sigma^{2}+\zeta^{2})}{2\sigma^{2}+\zeta^{2}}},\varphi(\alpha,t)}(\phi) d\phi\\
&= \sqrt{2\pi}\zeta\frac{\partial \varphi}{\partial t}(\alpha,t)\rho(\alpha) \frac{1}{\sqrt{2\pi (2\sigma^{2}+\zeta^{2})}}.
\end{align*}
The integral
$$I_{2} = \gamma D\int_{\R} F_{\sigma,\varphi(\alpha,t)}(\phi)\,(\phi G_\zeta*\Delta_{\alpha} f)(\phi) d\phi= - \gamma D \frac{\zeta^3}{\lp 2\sigma^2+\zeta^2\rp^{3/2}}\, \left[\rho \Delta_\alpha \varphi + 2\nabla_\alpha \rho \cdot \nabla_\alpha \varphi \right]$$
was already computed in the symmetric case (observe that the factor $\rho$ is not present here), so we are left with computing
\begin{align*}
I_{3} &=- \gamma D\int_{\R} F_{\sigma,\varphi(\alpha,t)}(\phi) \lp \frac{\phi G_\zeta*F_{\sigma, \varphi(\alpha,t)}\lp G_\zeta*\Delta_{\alpha} \lp \rho(\alpha)F_{\sigma, \varphi(\alpha,t)}\rp \rp}{G_\zeta*F_{\sigma, \varphi(\alpha,t)}}\rp(\phi) d\phi\,.
\end{align*}
Using the computation for $\Delta_\alpha (\rho\, F_{\sigma, \varphi(\alpha,t)})$ in Eq. \eqref{eq:laplacian_f}, we can compute analogously as for the symmetric case. One can check that the only term that does not cancel by an antisymmetry argument (as done previously for the integrals $I_1$ and $I_3$ in the symmetric case) is
\begin{align*}
I_{3} &=\left[\rho \Delta_\alpha \varphi + 2\nabla_\alpha \rho \cdot \nabla_\alpha \varphi \right] \gamma D\int_{\R} F_{\sigma,\varphi(\alpha,t)}(\phi) \frac{(\phi G_\zeta*f)(G_\zeta*\partial_\phi F_{\sigma,\varphi(\alpha,t)})}{(G_\zeta*f)}(\phi)d\phi\\
&=-\zeta^{2} \left[\rho \Delta_\alpha \varphi + 2\nabla_\alpha \rho \cdot \nabla_\alpha \varphi \right]\gamma D\int_{\R} F_{\sigma,\varphi(\alpha,t)}(\phi) \frac{\partial_\phi\lp G_\zeta*F_{\sigma,\varphi(\alpha,t)}\rp\, \partial_\phi \lp G_\zeta* F_{\sigma,\varphi(\alpha,t)}\rp}{(G_\zeta*F_{\sigma,\varphi(\alpha,t)})}(\phi)d\phi\\
&=\zeta^{2} \left[\rho \Delta_\alpha \varphi + 2\nabla_\alpha \rho \cdot \nabla_\alpha \varphi \right]\gamma D\int_{\R} F_{\sigma,\varphi(\alpha,t)}(\phi)\, \partial_\phi \lp G_\zeta*F_{\sigma,\varphi(\alpha,t)}(\phi)\rp\frac{(\phi-\varphi)}{\sigma^{2}+\zeta^{2}}(\phi)d\phi\\
&=-\sqrt{2\pi}\zeta^{3}\left[\rho \Delta_\alpha \varphi + 2\nabla_\alpha \rho \cdot \nabla_\alpha \varphi \right]\gamma D\int_{\R} F_{\sigma,\varphi(\alpha,t)}(\phi) F_{\sqrt{\sigma^{2}+\zeta^{2}},\varphi(\alpha,t)}(\phi) \frac{(\phi-\varphi)^{2}}{(\sigma^{2}+\zeta^{2})^{2}}d\phi\\
&=-\sqrt{2\pi}\zeta^{3}\left[\rho \Delta_\alpha \varphi + 2\nabla_\alpha \rho \cdot \nabla_\alpha \varphi \right]\gamma D\int_{\R} \frac{(\phi-\varphi)^{2}}{(\sigma^{2}+\zeta^{2})^{2}}    \frac{F_{\sqrt{\frac{\sigma^{2}(\sigma^{2}+\zeta^{2})}{2\sigma^{2}+\zeta^{2}}}, \varphi(\alpha,t)}}{\sqrt{2\pi (2\sigma^{2}+\zeta^{2})}} (\phi)d\phi  \\
&=-\sqrt{2\pi}\zeta^{3} \left[\rho \Delta_\alpha \varphi + 2\nabla_\alpha \rho \cdot \nabla_\alpha \varphi \right]\frac{ \gamma\,D\,\sigma^{2}}{(\sigma^{2}+\zeta^{2})\sqrt{2\pi (2\sigma^{2}+\zeta^{2})}(2\sigma^{2}+\zeta^{2})}. 
\end{align*}
We conclude
$$\rho\frac{\partial \varphi}{\partial t} =  \gamma D\frac{\zeta^{2}}{\sigma^{2}+\zeta^{2}} \left[\rho \Delta_\alpha \varphi + 2\nabla_\alpha \rho \cdot \nabla_\alpha \varphi \right].$$

\end{proof}

\section{Conclusions}

In this work we have studied the long time dynamics of an opinion model in the limit of spatially localized interactions under the assumption of low consensus. The considered model is borrowed from \cite{Bouchaud_etal_JStatMech14} where it is used to model voter's intentions. We have investigated the dynamics under two different interaction rates: one given in the original paper \cite{Bouchaud_etal_JStatMech14} (symmetric binary interactions) and the other inspired from Ref. \cite{Motsch_Tadmor_JSP11, Motsch_Tadmor_preprint} (non-symmetric binary interactions). In particular, we show that the density of opinion $\rho \varphi$ is a conserved quantity  for the symmetric case but not for the non-symmetric case; this is a direct translation of the conservative (or non-conservative) properties of their respective kinetic equations. Moreover, in Cor. \ref{cor:consensus} we give criterion on the spatial density $\rho$ to decide for which one of the two rates  consensus is reached faster.

The interaction rates considered come with a modeling choice: the symmetric rate in  \cite{Bouchaud_etal_JStatMech14} assumes, for instance, that any two pairs of individuals have the same influence on each other; this is not the case for the non-symmetric rate, which assumes that individuals in a large group are more influential than solitary ones.  Other modelling assumptions can be consider, for example, we could assume transport of individuals between different spatial regions. In this manner, one could assume that individuals move towards places where larger clusters are formed (migration from the country side to the cities) or a rule where individuals  move to places where others think alike. As we can see, a variety of possible modelling assumptions can modify these opinion models in various directions. The validity of the model considered will depend on the actual situation that we wish to describe. In any case, the derivation of the macroscopic equations for these models leads, firstly, to the long time dynamics, and secondly, to understanding the relevance of a particular modeling choice and contrast it with other alternative choices. Here, we have illustrated this idea by comparing the conservative properties and the influence on the speed towards consensus for two different interaction rates.

Deriving macroscopic equations for these models has many benefits: it gives access to the long time dynamics in both a qualitative way (by analysing the dependence of the diffusion constants on the model parameters) and in a quantitative way (by enabling numerical simulations for a large number of agents and over a long time without suffering from the curse of complexity). It also allows us to explore the relevance of particular modeling choices by contrasting them with alternate choices (illustrated here by the striking differences between two different interaction models in their conservation properties and speed towards consensus). For the non-symmetric case, we were led to develop a theory beyond the state-of-the-art of kinetic theory which is mostly restricted to the conservative case (see however Ref. \cite{Degond3} for another instance of treatment of non-conservative interactions). Being able to treat non-conservative interactions is key towards the development of kinetic theory for social interactions as such interactions can't be associated with a conserved quantity in general.

\bigskip
\section*{Data statement}
No new data was generated in the course of this research.

\bigskip

\end{document}